\documentclass[12pt, a4paper,reqno]{amsart}
\usepackage[a4paper, margin=4cm]{}
\usepackage{amssymb,amsfonts, amsthm, xfrac, amscd}
\usepackage{setspace}
\usepackage{breqn}
\usepackage{times}
\usepackage{changes}
\usepackage{adjustbox}
\usepackage{color}
\usepackage{array}
\usepackage{parskip}
\usepackage{microtype}

\usepackage{color}
\usepackage[hidelinks]{hyperref}

\allowdisplaybreaks[4]

\newtheoremstyle{myremark}     {11pt}{11pt}{}{}{\bfseries}{.}{.5em}{}

\newtheorem{thm}{Theorem}[section]
\newtheorem{cor}[thm]{Corollary}
\newtheorem{lem}[thm]{Lemma}
\newtheorem{pro}[thm]{Proposition}
\theoremstyle{definition}
\newtheorem{defn}[thm]{Definition}
\newtheorem{exmp}[thm]{Example}
\theoremstyle{myremark}
\newtheorem{rem}[thm]{Remark}

\numberwithin{equation}{section}
\begin{document}

	\title[Linear combination of bilateral gamma random variables]{Linear combination of bilateral gamma random variables: distributional theory and approximations}

	\author[Barman and  Vellaisamy]{Kalyan Barman and  Palaniappan Vellaisamy}
	\address{\hskip-\parindent
		Kalyan Barman, Department of Mathematics, NIT Warangal,
		Warangal - 506004, India.}
	
	\email{barmankalyan@nitw.ac.in}



		\address{\hskip-\parindent
		P Vellaisamy, Department of Statistics and Applied Probability, UC Santa Barbara, Santa Barbara, CA, 93106, USA.}
	\email{pvellais@ucsb.edu}
		\subjclass[2020]{62E15; 62E17; 62P05; 60E05; 60E07}
	\keywords{Bilateral gamma distribution; Infinitely divisible distributions; L\'evy measure; Stein's method; Stock models}
	
	\begin{abstract}
		In this article, we obtain the exact distribution of a linear combination of bilateral gamma (BG) random variables (r.v.s). Next, we discuss the distributional properties of the linear combination of BG r.v.s, including probability density function, cumulant generating function and characteristic function. A Stein characterization is developed, which leads us to several distributional approximation results with explicit error bounds in both Kolmogorov and Wasserstein distances. Related limit theorems are also discussed. Furthermore, we show that the associated L\'evy processes are finite-variation processes with BG distributed increments having random parameters. Finally, we apply our results in exponential stock models.
	\end{abstract}

	\maketitle
	
\section{Introduction} \label{Intro}	
\noindent
The distribution of linear combination of independent gamma random variables (r.v.s) emerge naturally in various areas of probability and statistics, see Diaconis and Perlman \cite{DP1990}. The authors mentioned that the distribution of such a combination is not expressible in a closed form and so discussed approximations and studied tail probabilities. Exploiting the relationship between  negative binomial and gamma distributions (see \cite{EZ1980} or \cite{VS2010}), Vellaisamy and Upadhye \cite{pvns2009} derived the exact distribution of a linear combination of independent gamma r.v.s with arbitrary parameters.

\noindent
The bilateral-gamma (BG) distributions form a four-parameter family with a rich distributional structure (see \cite{bilateral0} and \cite{bilateral1}) and the properties that are useful for applications. This family includes the variance gamma (VG), symmetric variance gamma (SVG), and Laplace distributions, as special cases. In addition,  gamma and normal distributions appear as limiting cases, together with several other related distributions. The BG distributions are self-decomposable, stable under convolution, and have a simple cumulant generating function (cgf). The associated L\'evy processes are finite-variation processes making infinitely many jumps at each interval with positive length, and  their increments are BG distributed. Hence, the BG process provides a flexible framework for modelling fluctuations in financial markets, see K$\ddot{u}$chler and Tappe \cite{bilateral2}. 

\noindent
In this paper, we investigate the distribution of linear combinations of BG r.v.s. This family includes the linear combinations of gamma r.v.s,  as  limiting cases,  studied by Vellaisamy and Upadhye \cite{pvns2009}. We discuss several probablistic properties  of linear combinations of BG r.v.s, and show how they are related to the other distributions studied in the literature. They possess a number  of properties which make them useful for applications. The distribution of a linear combination of BG r.v.s is infinitely divisible, self-decomposable, stable under convolution, and has probability density function (pdf), cgf and characteristic function (cf) in closed form. We  derive also a Stein characterization for these distributions, obtain several approximation results using Stein's method, and discuss  related limit theorems. For the Kolmogorov distance, the error bounds for a sequence of compound Poisson distribution that converges to the distribution of a linear combination of BG r.v.s is ponited out.  For the Wasserstein distance, an error bound on the distance between  two such linear combinations is obtained. Finally, following K$\ddot{u}$chler and Tappe \cite{bilateral2}, we show that the associated L\'evy processes are finite-variation processes having infinitely many jumps at each interval with positive length, and  their increments are BG distributed, but with random parameters. 

\noindent
The paper is organized as follows. In Section \ref{nopre}, we discuss some preliminary results and in   Section \ref{probgdistribution},  the exact distribution of the linear combination of BG r.v.s  and their distributional properties are derived. Section \ref{BSBGD} deals with the approximation of linear combination of BG r.v.s. In Section \ref{BGpro}, we discuss, following K$\ddot{\text{u}}$chler and Tappe \cite{bilateral2},  the BG process associated
with  the linear combination of BG r.v.s and apply it to the exponential stock models.

\section{Preliminary results}\label{nopre}
\noindent
In this section, we introduce the notations and preliminaries required for this article.
\subsection{Bilateral-gamma distributions}\label{prebg}
We start with the definition and related results of BG distributions. Let $Ga(\alpha,p)$ denote the gamma distribution with parameters $\alpha$ and $p$. A BG distribution (also known as gamma difference distribution) with parameters $\alpha,\beta,p,q>0$ is defined as the distribution of $X=X_1-X_2$, where $X_1\sim Ga(\alpha,p)$ and $X_2\sim Ga(\beta,q)$ and they are independent, see \cite{gamdiff}. The cf of a BG distribution is 
\begin{align}
\phi_{bg}(z)&= \frac{1}{(1 -iz/\alpha)^p (1 +iz/\beta)^q} ,~~z\in\mathbb{R}\label{bicf0}\\
&=\exp\left(\int_{\mathbb{R}}(e^{izu}-1)\nu_{bg}(du)  \right),~~z\in\mathbb{R},\label{e1}
\end{align}
\noindent
where $\nu_{bg}$ is the L\'evy measure given by
\begin{align}\label{bglevymeasure}
\nu_{bg}(du)=\left(\frac{p  }{u}e^{-\alpha u}\mathbf{1}_{(0,\infty)}(u)-\frac{q  }{u}e^{-\beta|u|}\mathbf{1}_{(-\infty,0)}(u)\right)du.
\end{align}
\noindent
Taking Fourier transform (FT) on \eqref{bicf0}, the pdf of the $BG(\alpha,p,\beta,q)$ distribution has the integral form (see Equation 1.4 of \cite{gamdiff})
\begin{align}
h_X(x)=\frac{1}{2\pi}\displaystyle\int_{\mathbb{R}}\frac{e^{-ixz}}{(1-iz/\alpha)^{p} (1+iz/\beta)^{q}}dz.
\end{align}

\noindent
An alternative form of the pdf of the $BG(\alpha,p,\beta,q)$ distribution follows from the convolution structure (see Equation 1.5 of \cite{gamdiff})
\begin{align}\label{newref02}
h_X(x)=\frac{\alpha^{p} \beta^{q}}{\Gamma p \Gamma q} \begin{cases}
e^{\beta x}\int_{x}^{\infty}y^{p-1}(y-x)^{q-1}e^{-(\alpha+\beta)y}dy  ,&~x\in(0,\infty)\\
e^{-\alpha x}\int_{-x}^{\infty}y^{q-1}(y+x)^{p-1}e^{-(\alpha+\beta)y}dy  ,&~x\in(-\infty,0).
\end{cases}
\end{align}

\noindent
Some notable special cases and limiting distributions of the BG family are as follows
(see \cite{bilateral0}). Let $\overset{d}{=}$ denote the equality in distribution.
\begin{enumerate}
	\item[(i)] When $q=p$, then $BG(\alpha,p,\beta,p)\overset{d}{=}VG(\alpha,\beta,p)$ (variance-gamma  distribution).
	
	\item [(ii)] When $\beta=\alpha$ and $q=p$, then $BG(\alpha,p,\alpha,p)\overset{d}{=}SVG(\alpha,p)$ (symmetric variance-gamma distribution).
		
	\item [(iii)] The limiting case as $p\to \infty$,  the $SVG( \sqrt{2p}/\alpha,p)$ converges in law to a normal $\mathcal{N}(0,\alpha^2)$ distribution.
	
	\item [(iv)] The limiting case as $\beta \to \infty$, the $BG(\alpha,p,\beta,q)$ converges in law to gamma $Ga(\alpha,p)$ distribution.
\end{enumerate}
\noindent
Note that the $BG$ distributions are infinitely divisible and self-decomposable, see \cite{bilateral0}. Let now $X\sim BG(\alpha,p,\beta,q)$. Then the $k$-th cumulant (see \cite[Section 2]{bilateral0}) is given by
\begin{align}
C_{k}(X)&=(k-1)!\left(\frac{p}{\alpha^k}+(-1)^k \frac{q}{\beta^k}  \right),~k\geq 1. \label{cuforbg}
\end{align}
\noindent
In particular, the first two cumulants (that is, mean and variance) of $X$ are 
$C_1(X)=\mathbb{E}(X)= \left(\frac{p}{\alpha}-\frac{q}{\beta}\right) \text{ and } C_{2}(X)=\text{Var}(X)=\left(\frac{p}{\alpha^{2}}+\frac{q}{\beta^{2}}\right).$ For more details of BG distributions, we refer the reader to  \cite{bilateral0} and \cite{bilateral1}.

\subsection{Function spaces and probability metrics}\label{PP2:FS}
Let $\mathbb{N} = \{1,2,\ldots\} \text{ and }
\mathbb{N}_{0} = \mathbb{N} \cup \{0\}$ denote henceforth the set of positive and 
non-negative integers, respectively. 

The Schwartz space is defined as
\[
\mathcal{S}(\mathbb{R})
  := \Big\{ f \in C^{\infty}(\mathbb{R}) :
        \lim_{|x|\to\infty} |x^{m} f^{(n)}(x)| = 0,
        \ \ \forall\, m,n \in \mathbb{N}_{0}
     \Big\},
\]
where $f^{(n)}$ denotes the $n$-th derivative of $f$, with the convention $f^{(0)} = f$, and 
$C^\infty(\mathbb{R})$ is the space of smooth functions on $\mathbb{R}$. It is well known that the Forier-transform (FT) is an automorphism on $\mathcal{S}(\mathbb{R})$.  
For $f \in \mathcal{S}(\mathbb{R})$, we define its FT by $\widehat{f}(u)
   := \int_{\mathbb{R}} e^{-iux} f(x)\, dx,
    u \in \mathbb{R},$ and its inverse FT by $f(x)
   := \frac{1}{2\pi} \int_{\mathbb{R}} e^{iux} \widehat{f}(u)\, du,
   x \in \mathbb{R},$ see Stein and Shakarchi \cite{stein}.

\noindent
We now introduce the notion of smooth Wasserstein distance (see \cite{k0} or \cite{MS001}). For a fixed integer $r \in \mathbb{N}$, consider the function class
\begin{equation}\label{fs1}
\mathcal{W}_r := 
\Big\{ h:\mathbb{R}\to \mathbb{R} \;\Big|\; 
    h \text{ is $r$-times differentiable and } 
    \|h^{(k)}\| \leq 1,\; k=0,1,\ldots,r 
\Big\},
\end{equation}
where $\|h\| := \sup_{x \in \mathbb{R}} |h(x)|$. Then, for two random variables $Y$ and $Z$, the smooth Wasserstein distance of order $r$ is defined by
\begin{equation}\label{smwdis}
  d_r(Y,Z):=d_{\mathcal{W}_r}(Y,Z)
   := \sup_{h \in \mathcal{W}_r} 
      \left|\mathbb{E}[h(Y)] - \mathbb{E}[h(Z)]\right|.  
\end{equation}

Special choices of the function class $\mathcal{W}_r$ lead to well-known probability metrics:
\begin{align*}
 d_K(Y,Z) 
   &= \Big\{h:\mathbb{R}\to \mathbb{R} \;\big|\; 
          h=\mathbf{1}_{(-\infty, x]},~ x \in \mathbb{R}\Big\}, \\[0.3em]
 d_W(Y,Z)  &= \Big\{h:\mathbb{R}\to \mathbb{R} \;\big|\; 
          h \text{ is 1-Lipschitz, i.e., } |h(x)-h(y)| \leq |x-y| \Big\},\\[0.3em]
      d_{TV}(Y,Z)&=\Big\{h:\mathbb{R}\to \mathbb{R} \;\big|\; 
          h=\mathbf{1}_{A},~ A \in \mathfrak{B} (\mathbb{R})\Big\},
\end{align*}
where $d_K(Y,Z)$ is the Kolmogorov distance,  
$d_W(Y,Z)$ is the Wasserstein distance, and $d_{TV}(Y,Z)$ is  the total variation distance (see \cite{k0}). Since $\mathfrak{B} (\mathbb{R})$ are the Borel subsets of $\mathbb{R}$, we note that (see, \cite[Lecture 2]{Chatterjee2007})
\begin{align}\label{reltvk}
d_K(Y,Z) \leq d_{TV}(Y,Z).
\end{align}
\noindent
It is shown in Appendix~A of \cite{k0} that, for $r \geq 2$,
\begin{align}\label{ineq1}
d_{r-1}(Y,Z) &\leq 3\sqrt{2}\, \sqrt{\,d_r(Y,Z)}, \text{ and}\\
d_{1}(Y,Z) &\leq 
    \Big(3\sqrt{2}\Big)^{\sum_{i=1}^{r-1} 2^{-(i-1)}} 
    \Big(d_r(Y,Z)\Big)^{2^{-(r-1)}}.
    \label{mre1}
\end{align}
\noindent
Moreover, there exists a universal constant $c>0$ such that
\[
d_{K}(Y,Z) \;\leq\; c\, \sqrt{\,d_{1}(Y,Z)}.
\]
Combining this with \eqref{mre1}, we obtain the useful bound
\[
d_{K}(Y,Z) 
   \;\leq\; 
   c \Big(3\sqrt{2}\Big)^{\,1-2^{1-r}}
   \Big(d_r(Y,Z)\Big)^{2^{-r}}, 
   \qquad r \geq 2.
\]

\vfill
\noindent
Finally, the following order relationship between the distances holds for all $r \geq 1$ (see \cite[equation 2.16]{k0}):
\begin{equation}\label{metrico1}
d_r(Y,Z) \;\leq\; d_1(Y,Z) \;\leq\; d_W(Y,Z).
\end{equation}
\subsection{Essence of Stein's Method}
Stein's method is based on the observation that a real-valued rv $Z$ has distribution $F_Z$ if and only if there exists a suitable operator $\mathcal{A}$, called the Stein operator, such that
$$
\mathbb{E}\big[\mathcal{A}f(Z)\big] = 0,
$$
for all $f$ belonging to a ``nice'' class of functions $\mathcal{F}$. This characterization naturally leads to the Stein equation
\begin{equation}\label{normal2}
\mathcal{A}f(x) = h(x) - \mathbb{E}[h(Z)],
\end{equation}
where $h$ is a real-valued test function. Replacing $x$ by $Y$ and taking expectations on both sides of \eqref{normal2} yields
\begin{equation}\label{normal3}
\mathbb{E}[h(Y)] - \mathbb{E}[h(Z)] = \mathbb{E}\big[\mathcal{A}f(Y)\big].
\end{equation}
The identity \eqref{normal3} plays a crucial role in Stein's method. For a given test function $h$, bounding the quantity $\big|\mathbb{E}[h(Y)] - \mathbb{E}[h(Z)]\big|$ reduces to obtaining suitable estimates on the solution $f$ of the Stein equation \eqref{normal2} together with information about the distribution of $Y$. For further details and applications of Stein's method, we refer the reader to the monograph \cite{nourdin} and the references therein.

\vspace{-.25cm}

\section{Linear combinations of BG random variables}\label{probgdistribution}
\noindent
 In this section, we discuss various properties of the distribution of linear combinations of BG r.v.s. First, we obtain the exact distribution of difference of linear combinations of gamma r.v.s with arbitrary parameters, which is a generalization of Theorem 3.1 of \cite{pvns2009}. In \cite{DP1990}, the authors mentioned that the distribution of sum of independent gamma r.v.s is not expressible in a closed form and so discussed approximations and studied tail probabilities. Using the connection between the negative binomial and gamma distributions (see \cite{EZ1980} or \cite{VS2010}), Vellaisamy and Upadhye \cite{pvns2009} obtain the exact distribution of weighted sums of independent gamma random variables with arbitrary parameters. 
 \vspace{-.5cm}
\subsection{Notations}\label{nota} Let $X_1,X_2,\ldots,X_n$ be independent random variables, where $X_j \sim Ga(\alpha_j,p_j)$ and $Y_1,Y_2,\ldots,Y_n$ be independent random variables, where $Y_j \sim Ga(\beta_j,q_j)$. Let $p=\sum_{j=1}^{n}p_j$ and $q=\sum_{j=1}^{n}q_j$. For $ 1 \le j \le n$, let $w_j^{(1)},w_j^{(2)}>0$ be positive constants.

\noindent
Define now
\begin{align}\label{defTn}
	T_n= \sum_{j=1}^{n}(w_j^{(1)}X_j-w_j^{(2)}Y_j),~n\geq 1.
\end{align} 
\noindent Note when $w_j^{(1)}=w_j^{(2)},  1 \le j \le n,$ we get a linear combination of BG r.v.s.

Next, we introduce the following notations:
\begin{align}\label{ref1}
\alpha=\max_{1\leq j \leq n} \frac{\alpha_j}{w_j^{(1)}+\alpha_j} \text{ and } \beta=\max_{1\leq j \leq n} \frac{\beta_j}{w_j^{(2)}+\beta_j},
\end{align}
and for $i, n \in \mathbb{N}$,
\begin{align*}
	c_n=\prod_{j=1}^{n}\Big(  \frac{(1-\alpha)\alpha_j}{w_j^{(1)}\alpha}  \Big)^{p_j}, \quad  d_n=\prod_{j=1}^{n}\Big(  \frac{(1-\beta)\beta_j}{w_j^{(2)}\beta}  \Big)^{q_j} \\
	a_i=\frac{1}{i}\sum_{j=1}^{n}p_j\Big(\frac{1-(1-\alpha)\alpha_j}{w_j^{(1)}\alpha} \Big)^i, \quad   b_i=\frac{1}{i}\sum_{j=1}^{n}q_j\Big(\frac{1-(1-\beta)\beta_j}{w_j^{(2)}\beta} \Big)^i. 
\end{align*}

 Let $L_n$ and $M_n$ be two discrete r.v.s with probability distributions (see \cite{pvns2009}), for 
\begin{align}\label{ref2}
P(L_n=k)=c_n\gamma_k,~~  P(M_n=k)=d_n\delta_k, ~~k \ge 1,
\end{align}
 where $\gamma_0=1$, $\gamma_k=\frac{1}{k}\sum_{i=1}^{k}ia_i\gamma_{k-i}$ and $\delta_0=1$, $\delta_k=\frac{1}{k}\sum_{i=1}^{k}ib_i\delta_{k-i}$, for $k \ge 1.$.

\begin{thm}\label{THM1}
	Let $T_n$ be defined as in \eqref{defTn}. Then $T_n\sim BG(\frac{\alpha}{1-\alpha}, L_n+p,\frac{\beta}{1-\beta},M_n+q)$, where $\alpha, \beta$ are defined in \eqref{ref1} and the r.v.s $L_n$ and $M_n$ have distributions defined in \eqref{ref2}.
\end{thm}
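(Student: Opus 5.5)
The plan is to work entirely with characteristic functions and reduce the statement to the gamma-mixture representation behind Theorem 3.1 of \cite{pvns2009}, applied separately to the positive part $S_n=\sum_{j} w_j^{(1)}X_j$ and the negative part $R_n=\sum_j w_j^{(2)}Y_j$. Since $w_j^{(1)}X_j\sim Ga(\alpha_j/w_j^{(1)},p_j)$, the cf of $S_n$ is $\prod_{j=1}^n(1-iz w_j^{(1)}/\alpha_j)^{-p_j}$. Put $\theta=\alpha/(1-\alpha)$. By \eqref{ref1}, $\alpha_j/(w_j^{(1)}+\alpha_j)\le\alpha$ for every $j$, and this is equivalent to $\alpha_j/w_j^{(1)}\le\theta$. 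Set
\[
r_j:=1-\frac{(1-\alpha)\alpha_j}{w_j^{(1)}\alpha}\in[0,1), \qquad u(z):=\Big(1-\frac{iz}{\theta}\Big)^{-1}.
\]
Here $r_j$ is the base appearing in $a_i$; I read the paper's expression with this bracketing. A direct algebraic rearrangement then gives the key factorization
\[
1-\frac{iz\,w_j^{(1)}}{\alpha_j}=\frac{w_j^{(1)}\theta}{\alpha_j}\Big(1-\frac{iz}{\theta}\Big)\big(1-r_j\,u(z)\big).
\]

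Taking the product over $j$, the cf of $S_n$ becomes
\[
c_n\,\Big(1-\frac{iz}{\theta}\Big)^{-p}\prod_{j=1}^n\big(1-r_ju\big)^{-p_j},
\]
where the constants combine exactly into $c_n$. For real $z$ we have $|u(z)|\le 1$, and since $0\le r_j<1$ we may expand
\[
\prod_j(1-r_ju)^{-p_j}=\exp\Big(\sum_j p_j\sum_{i\ge1}\frac{(r_ju)^i}{i}\Big)=\exp\Big(\sum_{i\ge1}a_iu^i\Big)=\sum_{k\ge0}\gamma_ku^k.
\]
The last equality defines $\gamma_k$ as the coefficients of the exponential of a power series. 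Differentiating $G(u)=\exp(A(u))$ gives $G'=A'G$, and comparing coefficients yields exactly the recursion $k\gamma_k=\sum_{i=1}^k ia_i\gamma_{k-i}$ with $\gamma_0=1$. Setting $z=0$, so $u=1$, forces $c_n\sum_k\gamma_k=1$. Since $\gamma_k\ge0$ (all $a_i\ge0$), $(c_n\gamma_k)_{k\ge0}$ is a probability distribution; this is the law of $L_n$, with $k=0$ included. Hence
\[
\mathbb{E}e^{izS_n}=\sum_{k\ge0}P(L_n=k)\Big(1-\frac{iz}{\theta}\Big)^{-(p+k)},
\]
that is, $S_n$ is a $Ga(\theta,p+L_n)$ mixture.

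The identical argument with $\theta'=\beta/(1-\beta)$, the quantities $d_n,b_i,\delta_k$, and $z$ replaced by $-z$ gives
\[
\mathbb{E}e^{-izR_n}=\sum_{m\ge0}P(M_n=m)\Big(1+\frac{iz}{\theta'}\Big)^{-(q+m)}.
\]
Since $S_n$ and $R_n$ are independent, the cf of $T_n=S_n-R_n$ is the product of the two series. Written as a double sum, this is $\sum_{k,m}P(L_n=k)P(M_n=m)\,\phi_{bg}(z)$, where $\phi_{bg}$ is the cf \eqref{bicf0} of $BG(\theta,p+k,\theta',q+m)$, with $L_n$ and $M_n$ taken independent. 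By uniqueness of cf's this is the claimed mixture $BG(\frac{\alpha}{1-\alpha},L_n+p,\frac{\beta}{1-\beta},M_n+q)$.

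The main obstacle is the analytic justification rather than the algebra. First, the branch of $(1-iz/\theta)^{-p}$ must be consistent with the factorization; this holds because all factors have positive real part and principal branches multiply correctly. Second, the power-series expansion must converge at $|u|\le1$, including $u=1$, which is needed for the normalization. Both follow from $\max_j r_j<1$: the series $\sum_i a_i|u|^i$ then converges geometrically, and the interchange of sums is justified by absolute convergence.
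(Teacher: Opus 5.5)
Your proof is correct and follows the same route as the paper. You split $T_n=\sum_j w_j^{(1)}X_j-\sum_j w_j^{(2)}Y_j$, write each part as a gamma mixture with random shape $p+L_n$ (resp.\ $q+M_n$), and combine the two by independence. The only difference is that you derive the gamma-mixture representation yourself, through the factorization $1-izw_j^{(1)}/\alpha_j=\frac{w_j^{(1)}\theta}{\alpha_j}(1-iz/\theta)(1-r_ju)$ and the exponential-of-power-series recursion, whereas the paper cites Theorem 3.1 of \cite{pvns2009}. Doing so also settles points the paper leaves implicit: the bracketing of $a_i$, the atom $P(L_n=0)=c_n$, and the independence of $L_n$ and $M_n$ in the resulting mixture.
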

\begin{proof}
	First note that 
	\begin{align}\label{gdd0}
		T_n	=\sum_{j=1}^{n}(w_j^{(1)}X_j-w_j^{(2)}Y_j)=\sum_{j=1}^{n}w_j^{(1)}X_j-\sum_{j=1}^{n}w_j^{(2)}Y_j.
	\end{align}
\vspace{-0.5cm}	

	\noindent
	 From Theorem 3.1 of \cite{pvns2009}, we get $\sum_{j=1}^{n}w_j^{(1)}X_j \sim Ga(\frac{\alpha}{1-\alpha},L_n+p)$ and $\sum_{j=1}^{n}w_j^{(2)}Y_j \sim Ga(\frac{\beta}{1-\beta}, M_n+q)$. It follows, from the definition of BG distribution,  that $T_n \sim BG (\frac{\alpha}{1-\alpha},L_n+p,\frac{\beta}{1-\beta}, M_n+q)$, which proves the result. 
\end{proof}
\noindent
Since the parameters $L_n$ and $M_n$ are random, the distribution of $T_n$ is indeed a mixture of BG distributions.

\noindent
The following corollary easily follows.
\vspace{-0.5cm}
\begin{cor}\label{LCgamma}
Let the conditions of Theorem \ref{THM1} hold and $\alpha,\beta$ be defined in \eqref{ref1}. Let $L_n$ and $M_n$ be defined in \eqref{ref2}.
\begin{itemize}
\item[(i)] If $\beta \to 1$, as $n\to \infty$, then $T_n \overset{\mathfrak{L}}{\to} X,$ where $X\sim Ga(\frac{\alpha}{1-\alpha},L_n+p).$

\item[(ii)] If $\alpha \to 1$, as $n\to \infty$, then $T_n \overset{\mathfrak{L}}{\to} -Y,$ where $Y\sim Ga(\frac{\beta}{1-\beta},M_n+q).$
\end{itemize}
\end{cor}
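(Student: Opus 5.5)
Since Theorem \ref{THM1} gives the representation $T_n = U_n - V_n$ with $U_n=\sum_{j} w_j^{(1)}X_j\sim Ga(\frac{\alpha}{1-\alpha},L_n+p)$ and $V_n=\sum_j w_j^{(2)}Y_j\sim Ga(\frac{\beta}{1-\beta},M_n+q)$ independent, the plan is to show that the negative part $V_n$ vanishes in probability when $\beta\to 1$ (and symmetrically the positive part $U_n$ when $\alpha\to1$). The conclusion is then read in the mixture sense of Theorem \ref{THM1}: $T_n$ behaves like a r.v. whose conditional law given $L_n$ is $Ga(\frac{\alpha}{1-\alpha},L_n+p)$. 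Part (ii) is obtained from part (i) by applying it to $-T_n$, with the roles of $(X_j,w_j^{(1)},\alpha_j,p_j)$ and $(Y_j,w_j^{(2)},\beta_j,q_j)$ interchanged, so I treat only (i).

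\textbf{Step 1: $V_n$ is degenerate in the limit.} I will work through characteristic functions. Conditioning on $M_n$ and using \eqref{bicf0} with the gamma part only, I get $\mathbb{E}[e^{-izV_n}\mid M_n]=(1+iz(1-\beta)/\beta)^{-(M_n+q)}$. Rather than controlling the law of $M_n$ through the recursion \eqref{ref2}, I will use the unconditional factorization directly: $\mathbb{E}e^{-izV_n}=\prod_j (1+iz w_j^{(2)}/\beta_j)^{-q_j}$. The hypothesis $\beta\to1$ means $\max_j \beta_j/(w_j^{(2)}+\beta_j)\to1$, that is, some scale ratio $w_j^{(2)}/\beta_j\to0$.

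\textbf{Step 2: the main obstacle.} This is where the argument may break. Concluding that \emph{all} factors tend to $1$, so that $\mathbb{E}V_n=\sum_j q_jw_j^{(2)}/\beta_j\to0$, needs more than the hypothesis as stated, since the maximum only controls one ratio. I would first try the interpretation that the convergence is driven by the uniform bound $\beta\ge \beta_j/(w_j^{(2)}+\beta_j)$ for every $j$. If that is not enough, I would add the assumption that $\sum_j q_j w_j^{(2)}/\beta_j\to0$ (equivalently that the mean $(M_n+q)(1-\beta)/\beta$ of $V_n$ tends to $0$). Given this, Markov's inequality gives $V_n\to0$ in probability. Alternatively, conditionally on $M_n$ one has $\mathbb{E}[V_n\mid M_n]=(M_n+q)\frac{1-\beta}{\beta}$, which tends to $0$ in probability provided $M_n$ is tight.

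\textbf{Step 3: Slutsky.} Since $U_n$ and $V_n$ are independent and $V_n\to0$ in probability, Slutsky's lemma gives $T_n=U_n-V_n$ with the same limiting behaviour as $U_n$, whose law is $Ga(\frac{\alpha}{1-\alpha},L_n+p)$ by Theorem 3.1 of \cite{pvns2009}. In cf form, $\phi_{T_n}(z)=\phi_{U_n}(z)\,\phi_{V_n}(-z)$ and $\phi_{V_n}(-z)\to1$ pointwise, which is equivalent to $T_n\overset{\mathfrak{L}}{\to}X$. Equivalently, this is the limit $\beta\to\infty$ in item (iv) of Section \ref{prebg} with scale $\frac{\beta}{1-\beta}\to\infty$, applied conditionally on $(L_n,M_n)$ and then integrated by dominated convergence, since all cfs are bounded by $1$.
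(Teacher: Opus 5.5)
Your Step~2 worry is justified, and it is exactly what the paper's proof skips. The paper lets $\beta\to1$ term by term in the mixture \eqref{bircf0}: it sends each factor $(1+iz(1-\beta)/\beta)^{-(q+k)}$ to $1$ while holding the weights $P(M_n=k)$ fixed. But $P(M_n=k)=d_n\delta_k$ depends on $\beta$ through $d_n$ and the $b_i$. Since $\mathbb{E}[(M_n+q)(1-\beta)/\beta]=\mathbb{E}V_n=\sum_j q_jw_j^{(2)}/\beta_j$, mass of $M_n$ escaping to infinity can exactly offset $(1-\beta)/\beta\to0$.

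For instance, take $n=2$, $w_j^{(2)}=q_j=1$, $\beta_2=1$ and $\beta_1\to\infty$. Then $\beta=\beta_1/(1+\beta_1)\to1$, $d_n=1/\beta_1\to0$ and $P(M_n=k)=\beta_1^{-1}(1-\beta_1^{-1})^k$, yet $V=Y_1+Y_2\overset{\mathfrak{L}}{\to}Ga(1,1)$. A version with $n\to\infty$: take $w_j^{(1)}=w_j^{(2)}=p_j=q_j=1$, $\alpha_1=\beta_1=1$ and $\alpha_j=\beta_j=n^3$ for $2\le j\le n$. Then $\beta\to1$, yet $U_n\overset{\mathfrak{L}}{\to}Ga(1,1)$ while $T_n\overset{\mathfrak{L}}{\to}BG(1,1,1,1)$. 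So $T_n$ does not approach the law $Ga(\frac{\alpha}{1-\alpha},L_n+p)$ of $U_n$; here also $\alpha\to1$, so parts (i) and (ii) would assert incompatible limits.

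Thus the corollary is false under the hypothesis $\max_j\beta_j/(w_j^{(2)}+\beta_j)\to1$. Your first fallback cannot rescue it: the bound $\beta_j/(w_j^{(2)}+\beta_j)\le\beta$ only bounds each ratio above by a quantity tending to $1$. An extra assumption such as yours is genuinely necessary.

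With the added hypothesis $\sum_jq_jw_j^{(2)}/\beta_j\to0$, your Steps~1 and~3 form a correct proof, along a different and sounder route than the paper's. You use the product form \eqref{01cflcbg} of the cf of $V_n$, so no control of the law of $M_n$ is needed. The argument is even quantitative: $|h(U_n-V_n)-h(U_n)|\le V_n$ for $1$-Lipschitz $h$ gives $d_W(T_n,U_n)\le\sum_jq_jw_j^{(2)}/\beta_j$.

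A few points to tighten:
\begin{itemize}
\item The target $Ga(\frac{\alpha}{1-\alpha},L_n+p)$ depends on $n$, so phrase the conclusion as merging ($\phi_{T_n}(z)-\phi_{U_n}(z)\to0$, or the $d_W$ bound). Slutsky presupposes that $U_n$ converges.
\item Drop the closing ``equivalently'' remark. Conditioning on $(L_n,M_n)$ and invoking dominated convergence is precisely the paper's faulty step, because the mixing law of $M_n$ moves with $\beta$. Passing to the limit inside the mixture requires $(M_n+q)(1-\beta)/\beta\to0$ in probability, which your mean condition supplies but $\beta\to1$ does not.
\item The ``tight $M_n$'' alternative is likewise unavailable in the relevant regime (in both examples above $d_n=P(M_n=0)\to0$). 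It would also need $q(1-\beta)/\beta\to0$.
\item The sharp replacement for $\beta\to1$ comes from $\mathbb{E}e^{-V_n}=\prod_j(\beta_j/(w_j^{(2)}+\beta_j))^{q_j}$: $V_n\to0$ in probability if and only if this product tends to $1$. Your condition is a sufficient special case.
\end{itemize}
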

\begin{proof}
$(i)$ Using Theorem \ref{THM1} together with \eqref{bicf0} the cf of $T_n \sim BG (\frac{\alpha}{1-\alpha},L_n+p,\frac{\beta}{1-\beta}, M_n+q)$ can be seen as
\begin{align}\label{bircf0}
\phi_{T_n}(z)=\sum_{j=0}^{\infty}\sum_{k=0}^{\infty}\frac{P(L_n=j)P(M_n=k)}{\left(1 -
\frac{iz(1-\alpha)}{\alpha}\right)^{p+j} \left(1 +\frac{iz(1-\beta)}{\beta}\right)^{q+k}}, ~~z\in\mathbb{R}.
\end{align}
\noindent
Next taking $\beta \to 1$
in \eqref{bircf0}, we get
\begin{align}\label{bircf1}
\phi_X(z)=\sum_{j=0}^{\infty}\frac{P(L_n=j)}{\left(1 -\frac{iz(1-\alpha)}{\alpha}\right)^{p+j} }, ~~z\in\mathbb{R}.
\end{align}
 which is the cf of $X=\sum_{j=1}^{n}w_j^{(1)}X_j$, where $X\sim Ga(\frac{\alpha}{1-\alpha},L_n+p)$ (see Theorem 3.1 of \cite{pvns2009}).

\noindent
$(ii)$ Similarly, taking $\alpha \to 1$ in \eqref{bircf0}, Part (ii) follows.  This proves the result.
\end{proof}

\begin{rem}\label{rmkTn}
 When $w_j^{(1)}=w_j^{(2)}=w_j$, $j\geq 1$, Theorem \ref{THM1} yields the convolution of $n$-independent BG r.v.s with arbitrary parameters. Note Theorem \ref{THM1} gives a simple random parameter representation for the distribution of $T_n$, which may be helpful for analytical or inferential purposes. 
 \end{rem}
 \vspace{-0.5cm}
\noindent
Next, we obtain pdf, moments, cumulants and Stein characterizations of linear combinations of BG r.v.s. 
The Stein characterization will be required when we will consider approximation for the distribution of linear combination of BG r.v.s.

\subsection{Probability density function}\label{pdfTn}
Taking FT on \eqref{bircf0}, the pdf of $T_n\sim BG (\eta,L_n+p,\xi, M_n+q)$ has the integral form
\begin{align}\label{birpdf0}
h_{T_n}(x)&=\frac{1}{2\pi}\sum_{j=0}^{\infty}\sum_{k=0}^{\infty}\int_{\mathbb{R}}\frac{P(L_n=j)P(M_n=k) e^{-ixz}}{(1 -iz/\eta)^{p+j} (1 +iz/\xi)^{q+k}}~dz.
\end{align}
\noindent
where $\eta=\frac{\alpha}{1-\alpha}$ and $\xi=\frac{\beta}{1-\beta}$.

 Let
 \begin{align*}
  k_1(x):= & \int_{x}^{\infty}y^{p+j-1}(y-x)^{q+k-1}e^{-(\eta+\xi)y}dy  ,~x\in(0,\infty)\\
   k_2(x):= &\int_{-x}^{\infty}y^{q+k-1}(y+x)^{p+j-1}e^{-(\eta+\xi)y}dy  ,~x\in(-\infty,0). 
\end{align*}

Using Theorem \ref{THM1} together with equation \eqref{newref02}, an alternative form of the pdf of $T_n \sim BG (\eta,L_n+p,\xi, M_n+q)$ follows due to convolution structure:
\begin{align}\label{denlcbg0}
\nonumber	h_{T_{n}}(x)&=\sum_{j=0}^{\infty}\sum_{k=0}^{\infty}\frac{P(L_n=j)\eta^{p+j} P(M_n=k)\xi^{q+k}}{\Gamma (p+j) \Gamma (q+k)} \big( e^{\xi x}k_1(x)\mathbf{1}_{(0,\infty)}(x)\\
\nonumber&\quad\quad\quad\quad+  e^{-\eta x}k_2(x)\mathbf{1}_{(-\infty,0)}(x)\big)\\
\nonumber	&=c_nd_n\sum_{j=0}^{\infty}\gamma_j\sum_{k=0}^{\infty}\delta_k\frac{\eta^{p+j} \xi^{q+k}}{\Gamma (p+j) \Gamma (q+k)} \big( e^{\xi x}k_1(x)\mathbf{1}_{(0,\infty)}(x)\\
&\quad\quad\quad\quad+  e^{-\eta x}k_2(x)\mathbf{1}_{(-\infty,0)}(x)\big)
\end{align}
\noindent
Let $ F(a,b,x)$ denote the confluent hypergeometric function (see \cite[equation (78)]{cam2001}), given by 
\begin{align}\label{chf1}
 F(a,b,x)=\frac{1}{\Gamma a}\displaystyle\int_{0}^{\infty}e^{-xt}t^{a-1}(1+t)^{b-a-1}dt, ~\text{Re}(a),\text{Re}(b),\text{Re}(x)>0.
\end{align}
\noindent
From \eqref{denlcbg0}, consider the first integral
\begin{align}\label{chf2}
	\nonumber I_1(x)&=\int_{x}^{\infty}y^{p+j-1}(y-x)^{q+k-1}e^{-(\eta+\xi)y}dy  ,~x\in(0,\infty)\\
	\nonumber &=e^{-(\eta+\xi)x}\int_{0}^{\infty}(x+u)^{p+j-1}u^{q+k-1}e^{-(\eta+\xi)u}du\\
	&=e^{-(\eta+\xi)x}x^{p+q+j+k-1}\displaystyle\int_{0}^{\infty}e^{-(\eta+\xi)xt}t^{q+k-1}(1+t)^{p+j-1}dt .
\end{align}
\noindent
Using \eqref{chf1} in \eqref{chf2}, we write
\begin{align}\label{chf3}
I_1(x)=e^{-(\eta+\xi)x}x^{p+q+j+k-1}\Gamma(q+k)F(q+k,p+q+j+k, (\eta+\xi)x), ~x\in (0,\infty).
\end{align}
\noindent
From \eqref{denlcbg0}, consider the second integral
\begin{align}\label{chf4}
I_2(x)=\int_{-x}^{\infty}y^{q+k-1}(y+x)^{p+j-1}e^{-(\eta+\xi)y}dy  ,~x\in(-\infty,0).
\end{align}
\noindent
Following the steps similar to previous integral, we have for $~x\in (-\infty,0)$,
\begin{align}\label{chf5}
I_2(x)=e^{(\eta+\xi)x}(-x)^{p+q+j+k-1}\Gamma(q+k)F(p+j,p+q+j+k, -(\eta+\xi)x). 
\end{align}
\noindent
Using \eqref{chf3} and \eqref{chf5} in \eqref{denlcbg0}, we can write the pdf of $T_n$ as
\begin{align}
 h_{T_{n}}(x)&=c_nd_n\sum_{j=0}^{\infty}\gamma_j\sum_{k=0}^{\infty}\delta_k\frac{\eta^{p+j} \xi^{q+k}}{\Gamma (p+j) \Gamma (q+k)} K_2(x),
\end{align}
where
\begin{align}
	K_2(x)= \small{\begin{cases}
			e^{-\eta x}x^{p+q+j+k-1}\Gamma(q+k)F(q+k,p+q+j+k, (\eta+\xi)x),  ~~x\in(0,\infty)\\
			e^{\xi x}(-x)^{p+q+j+k-1}\Gamma(q+k)F(p+j,p+q+j+k, -(\eta+\xi)x),  ~~x\in(-\infty,0).
	\end{cases}}
\end{align}

\vspace{-.5cm}
\begin{rem}
(i) Scheuer \cite{sch1988} obtained the pdf of $S_n=\sum_{i=1}^{n}X_i,$ where $X_i \sim Ga (\alpha_i,p_i)$ are independent r.v.s, in terms of double series involving higher order derivatives in the inner terms. Later, Vellaisamy and Upadhye \cite[Equation 4.4]{pvns2009} obtained a  simpler expression for the pdf of $S_n$, represented as a single series involving the distribution of the random variable (r.v.) $L_n$. 

\item[(ii)] Note that, if $\xi \to \infty$ in \eqref{birpdf0}, we get 
\begin{align}\label{denlcg}
\nonumber h_{S_n}(x)&=\frac{1}{2\pi}\sum_{j=0}^{\infty}\int_{\mathbb{R}}\frac{P(L_n=j) e^{-ixz}}{(1 -iz/\eta)^{p+j} }~dz\\
\nonumber &=\frac{1}{2\pi}\sum_{j=0}^{\infty}P(L_n=j)\eta^{p+j}\int_{\mathbb{R}}(\eta-iz)^{-(p+j)} e^{-ixz}~dz\\
\nonumber &=\sum_{j=0}^{\infty}P(L_n=j)\frac{\eta^{p+j} x^{p+j-1}e^{-\eta x} }{\Gamma (p+j)}\\
&=\sum_{j=0}^{\infty}c_n\gamma_j\frac{\eta^{p+j} x^{p+j-1}e^{-\eta x} }{\Gamma (p+j)},~x>0,
\end{align}
\noindent
which is the pdf of $S_n$, see \cite[Equation 4.4]{pvns2009}.
\item[(iii)] Similarly, if $\eta \to \infty$ in \eqref{birpdf0}, we get 
\begin{align*}
h_{S_n}(x)
&=\frac{1}{2\pi}\sum_{k=0}^{\infty}P(M_n=k)\xi^{p+j}\int_{\mathbb{R}}(\xi+iz)^{-(q+j)} e^{-ixz}~dz\\
&=\sum_{j=0}^{\infty}P(M_n=j)\frac{\xi^{q+j} (-x)^{q+j-1}e^{\xi x} }{\Gamma (q+j)}\\
&=\sum_{j=0}^{\infty}d_n\delta_j\frac{\eta^{q+j} (-x)^{q+j-1}e^{\xi x} }{\Gamma (q+j)},~x<0,
\end{align*}
\noindent
which is the pdf of $V_n=-\sum_{j=1}^{n}w_j^{(2)}Y_j$.

\end{rem}
\vspace{-.4cm}
\subsection{Moment generating function}\label{cflcbg} By Theorem \ref{THM1}, the mgf of $T_n$ is given by
\begin{align}
\label{mggd1}	\mathbb{E}(e^{zT_n}) &=\bigg(\sum_{j=0}^{\infty}P(L_n=j) \left( \frac{\eta}{\eta -z} \right)^{p+j}\bigg)\bigg(\sum_{k=0}^{\infty}P(M_n=k)\left( \frac{\xi}{\xi +z} \right)^{q+k}\bigg),~z\in\mathbb{R}\\
&=\mathbb{E}_{L_n}\bigg(\left( \frac{\eta}{\eta -z} \right)^{p+L_n}\bigg) \mathbb{E}_{M_n}\bigg(\left( \frac{\xi}{\xi +z} \right)^{q+M_n}\bigg),~z\in\mathbb{R},\label{ggd1}
\end{align}
\noindent
where $\mathbb{E}_{L_n}$ and $\mathbb{E}_{M_n}$ are expectations with respect to r.v.s $L_n$ and $M_n$, respectively. 
\vspace{-.2cm}
\subsection{Moments}
 Note from Theorem \ref{THM1} that $T_n:=S_n-V_n$, where  $S_n \sim Ga(\eta,L_n+p)$ and $V_n \sim Ga(\xi, M_n+q)$ (see \eqref{gdd0}). Hence, we have for $k\in \mathbb{N},$
\begin{align}\label{momentformula}
\nonumber	\mathbb{E}\left( T_n^k  \right)&=\mathbb{E}\left( (S_n-V_n)^k  \right)\\
\nonumber	&=\sum_{j=0}^{k}\binom{k}{j}(-1)^{j}\mathbb{E}(S_n^{k-j})\mathbb{E}(V_n^j)\\
\nonumber	&=\sum_{j=0}^{k}\binom{k}{j}(-1)^{j}\frac{1}{\eta^{k-j} \xi^j}\bigg(\sum_{l=0}^{\infty}P(L_n=l) \frac{\Gamma (l+p+k-j)}{\Gamma(l+p)}   \bigg)\\
\nonumber	&\quad\quad\quad\times\bigg(\sum_{m=0}^{\infty}P(M_n=m) \frac{\Gamma (m+q+j)}{\Gamma (m+q)}   \bigg)\\
\nonumber	&=\sum_{j=0}^{k}\binom{k}{j}(-1)^{j}\frac{1}{\eta^{k-j} \xi^j}\bigg(\sum_{l=0}^{\infty}c_n\gamma_l \frac{\Gamma (l+p+k-j)}{\Gamma(l+p)}   \bigg)\\
	&\quad\quad\quad\quad\quad\times\bigg(\sum_{m=0}^{\infty}d_n\delta_m \frac{\Gamma (m+q+j)}{\Gamma (m+q)}   \bigg).
\end{align}
\subsection{Characterizations}
 In this section, we derive a Stein characterization for $T_n$. Before stating the result, recall that the BG distributions are infinitely divisible and self-decomposable, see \cite{bilateral0}. Also, $T_n$ can be viewed as $n$-independent sums of BG r.v.s and it has BG distribution, but with random parameters. Hence, the distribution of $T_n$ is infinitely divisible and self-decomposable. Moreover, the L\'evy-Khintchine representation of the cf of $T_n$ as \eqref{e1} is given by
\begin{align}
\nonumber\phi_{T_n}(z)&=\mathbb{E}\bigg(e^{iz T_n} \bigg)=\mathbb{E}\bigg(e^{iz \left(\sum_{j=1}^{n} (w_j^{(1)}X_j-w_j^{(2)}Y_j\right) }\bigg),~z\in\mathbb{R}\\
\label{01cflcbg}&=\prod_{j=1}^{n}\left(\frac{\alpha_j}{\alpha_j-izw_{j}^{(1)}} \right)^{p_j}  \left(\frac{\beta_j}{\beta_j+izw_{j}^{(2)}}\right)^{q_j} ,~z\in\mathbb{R} \\
\label{lccf1}&=\prod_{j=1}^{n}\exp \left(\displaystyle\int_{\mathbb{R}}(e^{izu}-1) \nu_j(du)  \right),~z\in\mathbb{R} ~~(\text{use Frullani identity}),
\end{align} 
where $\nu_j(du)=\left(\frac{p_j}{u}e^{-\frac{\alpha_j u}{w_{j}^{(1)}}}\mathbf{1}_{(0,\infty)}(u)-\frac{q_j}{u}e^{-\frac{\beta_j |u|}{w_{j}^{(2)}}}\mathbf{1}_{(-\infty,0)}(u)\right)du.$ Also, the cf \eqref{lccf1} is
\begin{align}\label{lcbg2}
\phi_{T_{n}}(z)=\exp\left(\displaystyle\int_{\mathbb{R}}(e^{izu}-1) \nu_{T_n}(du)  \right), 	~z\in\mathbb{R},
\end{align}
\noindent
where $\nu_{T_n}$ is the L\'evy measure given by
\begin{align}\label{lcbg1}
\nu_{T_n}(du)=\frac{1}{u}\sum_{j=1}^{n}\left(p_je^{-\frac{\alpha_j u}{w_{j}^{(1)}}}\mathbf{1}_{(0,\infty)}(u)-q_je^{-\frac{\beta_j |u|}{w_{j}^{(2)}}}\mathbf{1}_{(-\infty,0)}(u) \right)du.
\end{align}
\noindent
When $n=1$, $w_{j}^{(1)}=w_{j}^{(2)}=1$, $\nu_{T_1}$ is the L\'evy measure of $BG(\alpha_1,p_1,\beta_1,q_1)$ distribution, see \cite{bilateral0}. Also, the $k$th order cumulant $C_k(T_n):=i^{-k}\frac{d^k}{dz^k}\log	\phi_{T_n}(z)|_{z=0}$ is
\begin{align}\label{cumu}
	C_k(T_n)=(k-1)! \int_{\mathbb{{R}}}u^{k}\nu_{T_n}(du),~k\in\mathbb{N}.
\end{align} 
\begin{pro}\label{th1}
	 Let $T_n$ be defined as in \eqref{defTn}. Then,
	\begin{align}\label{PP2:StenIdTn}
	\mathbb{E}\bigg(T_nf(T_n)-\displaystyle\int_{\mathbb{{R}}}uf(T_n+u)\nu_{T_n}(du) \bigg)=0,~~f\in\mathcal{S}(\mathbb{R}),
	\end{align}
	where $\nu_{T_n}$ is the L\'evy measure given in \eqref{lcbg1}.
\end{pro}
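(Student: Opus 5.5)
The plan is to prove the identity on the Fourier side, using the L\'evy--Khintchine representation \eqref{lcbg2}. First I note the sign convention: in \eqref{lcbg1}, for $u<0$ the density is $-q_j e^{-\beta_j|u|/w_j^{(2)}}/u = q_j e^{-\beta_j|u|/w_j^{(2)}}/|u|>0$. So $\nu_{T_n}$ is a positive measure. Moreover $u\,\nu_{T_n}(du)$ has the bounded, exponentially decaying density
\[
g(u)=\sum_{j=1}^n\Big(p_je^{-\alpha_ju/w_j^{(1)}}\mathbf 1_{(0,\infty)}(u)-q_je^{-\beta_j|u|/w_j^{(2)}}\mathbf 1_{(-\infty,0)}(u)\Big),
\]
which lies in $L^1(\mathbb R)$. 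Hence $\int|u|\,\nu_{T_n}(du)<\infty$, $\mathbb E|T_n|<\infty$, and $\phi_{T_n}$ is $C^1$. Differentiating under the integral in \eqref{lcbg2} (justified by $|iue^{izu}|\le |u|$ and integrability of $|u|\nu_{T_n}$) gives the key ODE
\begin{equation*}
\phi_{T_n}'(z)=i\,\phi_{T_n}(z)\int_{\mathbb R}u e^{izu}\,\nu_{T_n}(du)=i\,\phi_{T_n}(z)\,\widehat{g}(-z).
\end{equation*}
As a check, one can also obtain this directly by differentiating the product \eqref{01cflcbg}.

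Next, for $f\in\mathcal S(\mathbb R)$, I write $f(x)=\frac1{2\pi}\int e^{ixt}\widehat f(t)\,dt$ with $\widehat f\in\mathcal S(\mathbb R)$. For the first term,
\[
\mathbb E[T_nf(T_n)]=\frac1{2\pi}\int\widehat f(t)\,\mathbb E[T_ne^{itT_n}]\,dt=\frac1{2\pi i}\int\widehat f(t)\,\phi_{T_n}'(t)\,dt .
\]
Fubini applies because $\mathbb E|T_n|<\infty$ and $\widehat f\in L^1$. For the second term,
\[
\mathbb E\int uf(T_n+u)\,\nu_{T_n}(du)=\frac1{2\pi}\int\widehat f(t)\,\phi_{T_n}(t)\int u e^{itu}\,\nu_{T_n}(du)\,dt .
\]
Here Fubini is justified by $\int|\widehat f|<\infty$ together with $\int|u|\,\nu_{T_n}(du)<\infty$. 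Substituting the ODE shows that the second expression equals $\frac1{2\pi i}\int\widehat f(t)\phi_{T_n}'(t)\,dt$. The two terms therefore coincide, and \eqref{PP2:StenIdTn} follows.

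The main obstacle is bookkeeping of integrability rather than any conceptual difficulty. We need $|u|\nu_{T_n}$ to be finite so that both the differentiation under the integral sign and the Fubini interchanges are legitimate. The singularity of $1/u$ at the origin is cancelled by the factor $u$, and the tails decay exponentially, so this reduces to the $L^1$ bound on $g$ noted above. An alternative route that avoids Fourier analysis would be to verify the identity for a single gamma variable $w X$ via $\mathbb E[Xf(X)]$ integration by parts. One would then extend it to independent sums using the additivity of both sides in the L\'evy measure. I would keep the Fourier argument as the primary one, since it handles all $n$ components simultaneously.
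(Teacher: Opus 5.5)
Your proof is correct and follows essentially the same route as the paper: both rest on the identity $\phi_{T_n}'(z)=i\,\phi_{T_n}(z)\int_{\mathbb{R}}ue^{izu}\,\nu_{T_n}(du)$ obtained from \eqref{lcbg2} and transfer it to test functions by Fourier duality. The only difference is that you expand $f$ via its inverse Fourier transform, whereas the paper shows that $xh_{T_n}(x)-\int_{\mathbb{R}}uh_{T_n}(x-u)\,\nu_{T_n}(du)$ has vanishing Fourier transform and then integrates against $f$; your version needs no density and makes the Fubini steps explicit.
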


\vspace{-.6cm}
\begin{proof}
	Taking logarithms on both sides of (\ref{lcbg2}), and differentiating with respect to $z$,
	\begin{equation}\label{PP2:e4}
	\phi^{\prime}_{T_n}(z)=i\phi_{T_n}(z)\int_{\mathbb{R}}ue^{izu}\nu_{T_n}(du).
	\end{equation}
	
	\noindent
	Let $h_{T_n}(x)$ denote density of $T_n$. Then
	\begin{equation}\label{PP2:e5}
	\phi_{T_n}(z)=\displaystyle\int_{\mathbb{R}}e^{izx}h_{T_n}(x)dx~~\text{and}~~\phi^{\prime}_{T_n}(z)=i\displaystyle\int_{\mathbb{R}}xe^{izx}h_{T_n}(x)dx.
	\end{equation}
	
	\noindent
	Substituting \eqref{PP2:e5} into  \eqref{PP2:e4} and rearranging the integrals, we have
	\begin{align}	
	\displaystyle\int_{\mathbb{R}}xe^{izx}h_{T_n}(x)dx-\phi_{T_n}(z)\int_{\mathbb{R}}ue^{izu}\nu_{T_n}(du)=0\label{PP2:e6}
	\end{align}
	\noindent
	The second integral of \eqref{PP2:e6} can be written as
	\begin{align}
	\nonumber \left(\int_{\mathbb R}ue^{izu}\nu_{T_n}(du)\right)\phi_{T_n}(z)&=\int_{\mathbb R}\int_{\mathbb R}ue^{izu}e^{izx}h_{T_n}(x)dx\nu_{T_n}(du)\\
	\nonumber &=\int_{\mathbb R}\int_{\mathbb R}ue^{iz(u+x)}\nu_{T_n}(du)h_{T_n}(x)dx\\
	\nonumber &=\int_{\mathbb R}\int_{\mathbb R}ue^{izy}\nu_{T_n}(du)h_{T_n}(y-u)dy\\
	&=\int_{\mathbb R}e^{izx}\int_{\mathbb R}uh_{T_n}(x-u)\nu_{T_n}(du)dx.\label{PP2:e7}
	\end{align}
	\noindent
	Substituting \eqref{PP2:e7} into \eqref{PP2:e6}, we have
	\begin{align}
	\nonumber	0&=\displaystyle\int_{\mathbb{R}}xe^{izx}h_{T_n}(x)dx-\int_{\mathbb R}e^{izx}\int_{\mathbb R}uh_{T_n}(x-u)\nu_{T_n}(du)dx\\
	&=\displaystyle\int_{\mathbb{R}}e^{izx} \left( xh_{T_n}(x)-\int_{\mathbb R}uh_{T_n}(x-u)\nu_{T_n}(du) \right)dx. \label{PP2:e8}
	\end{align}
	\noindent
	Applying FT to \eqref{PP2:e8}, multiplying with $f\in \mathcal{S}(\mathbb{R}),$ and integrating over $\mathbb{R},$ we get
		\begin{align}\label{PP2:e9}
	\displaystyle\int_{\mathbb{R}}f(x) \left( xh_{T_n}(x)-\int_{\mathbb R}uh_{T_n}(x-u)\nu_{T_n}(du) \right)dx=0.
	\end{align}
	\noindent
	The second integral of \eqref{PP2:e9} can be seen as
	\begin{align}
	\nonumber \int_{\mathbb R}\int_{\mathbb R}uf(x)h_{T_n}(x-u)\nu_{T_n}(du)dx&=\int_{\mathbb R}\int_{\mathbb R}uf(y+u)h_{T_n}(y)\nu_{T_n}(du)dy\\ 
	&=\mathbb{E}\left(\int_{\mathbb R}f(T_n+u)u\nu_{T_n}(du)\right).\label{PP2:e10}
	\end{align}
	\noindent
	Substituting \eqref{PP2:e10} into \eqref{PP2:e9}, we have
	\begin{align*}
	\mathbb{E}\left(T_nf(T_n)-\displaystyle\int_{\mathbb{R}}f(T_n+u)u\nu_{T_n}(du) \right)=0,
	\end{align*}
	\noindent
	which proves \eqref{PP2:StenIdTn}.
	
	\noindent
	Assume conversely, \eqref{PP2:StenIdTn} holds for $\nu_{T_n}$ defined in \eqref{lcbg1}. For any $s \in \mathbb{R}$, let $f(x)=e^{isx}$, $x\in \mathbb{R}$, then (\ref{PP2:StenIdTn}) becomes
	\begin{align*}
	\mathbb{E}\left(T_ne^{isT_n}\right) &= \mathbb{E}\left(\int_{\mathbb R}e^{is(T_n+u)}u\nu_{T_n}(du)\right)\\
	&=\mathbb{E}\left(e^{isT_n}\int_{\mathbb R}e^{isu}u\nu_{T_n}(du)\right).
	\end{align*}
	\noindent
	Setting $\phi_{T_n}(s)=\mathbb{E}(e^{isX})$, then
	\begin{equation}\label{suff1}
	\phi_{T_n}^{\prime}(s)=i\phi_{T_n}(s)\int_{\mathbb R}e^{isu}u\nu_{T_n}(du).
	\end{equation}
	\noindent
	Integrating out the real and imaginary parts of (\ref{suff1}) leads, for any $z\geq 0$, to
	\begin{align*}
	\phi_{T_n}(z)&=\exp \left(i\int_{0}^{z}\int_{\mathbb R}e^{isu}u\nu_{T_n}(du)ds  \right)\\
	&=\exp\left(i\int_{\mathbb R}\int_{0}^{z}e^{isu}dsu\nu_{T_n}(du)  \right)\\
	&=\exp\left(\int_{\mathbb R}(e^{izu}-1)\nu_{T_n}(du)  \right).
	\end{align*}
	
	\noindent
	A similar computation for $z\leq0$ completes the derivation of cf of $T_n$. 
\end{proof}

\vspace*{-.7cm}
\subsection{Stein equation}\label{SEBGD}
Let $T_n$ be defined as in \eqref{defTn}. Then, from Proposition \ref{th1}, a Stein equation for $T_n$, is
\begin{equation}\label{PP2:e15}
\mathcal{A}f(x)=-xf(x)+\displaystyle\int_{\mathbb{R}}f(x+u)u\nu_{T_n}(du)= h(x)-\mathbb{E}(h(T_n)),
\end{equation}
\noindent
where $h\in \mathcal{M}$, a class of test functions. For a given $h\in \mathcal{M}$, we now use the semigroup technique to obtain $f_h$ that satisfies the integral equation \eqref{PP2:e15}. Barbour \cite{k8} developed this method for solving Stein equations, and Arras and Houdr\'e \cite{k0} extended it for infinitely divisible distributions with a finite first moment. Following Barbour's approach \cite{k8}, we choose a family of operators $(P_{t})_{t\geq0}$, for all $x\in\mathbb{R}$, as
\begin{equation}\label{PP2:e16}
P_{t}f(x):=\frac{1}{2\pi}\int_{\mathbb{R}}\widehat{f}(y)e^{iyxe^{-t} }\phi_t(y)dy, ~~f\in\mathcal{S}(\mathbb{{R}}),
\end{equation}
\noindent 
where $\hat{f}$ is the FT of $f$ and
\begin{align}\label{nPP2:a15}
\phi_t(y):=\frac{\phi_{T_n}(y)}{\phi_{T_n}(e^{-t}y)},~y\in \mathbb{{R}}.
\end{align}
\noindent
Since the distribution of $T_n$ is self-decomposable,  $\phi_{t}$ is a cf \cite[p.90]{sato} of some rv $X_{(t)}$, say. That is, for all $y\in \mathbb{R},$ and $t\geq 0,$
\begin{align}\label{PP2:a15}
\phi_t(y)=\displaystyle\int_{\mathbb{R}}e^{iy u}F_{X_{(t)}}(du),
\end{align}
\noindent
where $F_{X_{(t)}}$ is the law of $X_{(t)}$. Using \eqref{PP2:a15}, we get
\begin{align}
\nonumber  P_tf(x)&=\frac{1}{2\pi}\int_{\mathbb R}\int_{\mathbb{R}}\widehat{f}(y)e^{iy xe^{-t}}e^{iy u}F_{X_{(t)}}(du)dy\\
\nonumber  &=\frac{1}{2\pi}\int_{\mathbb R}\int_{\mathbb{R}}\widehat{f}(y)e^{iy (u+xe^{-t})}F_{X_{(t)}}(du)dy\\
&=\displaystyle\int_{\mathbb{R}}f(u+xe^{-t})F_{X_{(t)}}(du),\label{PP2:a17}
\end{align}
\noindent
where the last step follows by inverse FT (see Section \ref{PP2:FS}).
\begin{pro}\label{PP2:proSem}
	Let $(P_{t})_{t\geq0}$ be a family of operators defined in \eqref{PP2:e16}. Then
	\begin{itemize}
		\item [(i)] $(P_{t})_{t\geq 0}$ is a $\mathbb{C}_0$-semigroup on $\mathcal{S}(\mathbb{{R}})$.
		\item [(ii)] Its generator $L$ is given by
		\begin{align}
		\nonumber Lf(x)&=-xf^{\prime}(x)+\displaystyle\int_{\mathbb R}f^{\prime}(x+u)u\nu_{T_n}(du),~~f\in\mathcal{S}(\mathbb{R})\\
		\label{e7}&=\mathcal{A}f^{\prime}(x),
		\end{align}	
		
		\noindent
		where $\mathcal{A}$ is defined in \eqref{PP2:e15}.
	\end{itemize}
\end{pro}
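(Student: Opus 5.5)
We argue directly from the representation \eqref{PP2:a17}, $P_tf(x)=\mathbb{E}\,f(xe^{-t}+X_{(t)})$, together with the multiplicative structure $\phi_t(y)=\phi_{T_n}(y)/\phi_{T_n}(e^{-t}y)$. Self-decomposability guarantees that $\phi_t$ is a characteristic function, so this representation is available.

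\emph{Semigroup property and invariance of $\mathcal{S}(\mathbb{R})$.} First, $P_0f=f$, because $\phi_0\equiv 1$ and Fourier inversion holds on $\mathcal{S}(\mathbb{R})$. Next, by \eqref{PP2:e16}, the Fourier transform of $P_tf$ is $\widehat{P_tf}(y)=e^{t}\widehat f(e^{t}y)\phi_t(e^{t}y)$. Substituting this into the definition of $P_s(P_tf)$ and changing variables $y\mapsto e^{-t}y$, the composition has multiplier $\phi_t(y)\phi_s(e^{-t}y)$. The quotients telescope:
\[
\frac{\phi_{T_n}(y)}{\phi_{T_n}(e^{-t}y)}\cdot\frac{\phi_{T_n}(e^{-t}y)}{\phi_{T_n}(e^{-(s+t)}y)}=\phi_{s+t}(y),
\]
so $P_sP_t=P_{s+t}$. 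To see that $P_t$ maps $\mathcal{S}$ into $\mathcal{S}$, we show that $\phi_t$ is smooth with all derivatives of polynomial growth. This holds because $\log\phi_{T_n}$ is given explicitly by \eqref{01cflcbg} as a finite sum of terms $-p_j\log(1-izw_j^{(1)}/\alpha_j)$ and $-q_j\log(1+izw_j^{(2)}/\beta_j)$. Consequently $\phi_t$ is a finite product of factors $\bigl((\alpha_j-ie^{-t}yw_j^{(1)})/(\alpha_j-iyw_j^{(1)})\bigr)^{p_j}$ and their analogues, each smooth and bounded together with all derivatives. Multiplying the Schwartz function $\widehat f(e^t\cdot)$ by such a function gives again a Schwartz function, so $P_tf\in\mathcal{S}$.

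\emph{Strong continuity.} For each Schwartz seminorm $\sup_x|x^m(P_tf-f)^{(k)}(x)|$ we pass to the Fourier side, where it is controlled by $L^1$ norms of derivatives of $\widehat{P_tf}-\widehat f$. Each of these tends to $0$ as $t\to0^+$ by dominated convergence, since $\phi_t\to1$ locally uniformly with all derivatives and we have the polynomial bounds above. Continuity at a general $t$ then follows from the semigroup property. This establishes (i).

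\emph{Generator.} We compute $\frac{d}{dt}P_tf(x)\big|_{t=0}$ by differentiating under the integral in \eqref{PP2:e16}. The derivative of $e^{iyxe^{-t}}$ at $t=0$ contributes $-ixy\,e^{iyx}$, and $\frac{1}{2\pi}\int\widehat f(y)(-ixy)e^{iyx}dy=-xf'(x)$. For $\phi_t$, using \eqref{lcbg2},
\[
\frac{d}{dt}\phi_t(y)\Big|_{t=0}=y\,\frac{\phi_{T_n}'(y)}{\phi_{T_n}(y)}=iy\int_{\mathbb R}ue^{iyu}\nu_{T_n}(du),
\]
which is exactly \eqref{PP2:e4}. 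This term therefore contributes
\[
\frac{1}{2\pi}\int\widehat f(y)\,iy\int ue^{iy(x+u)}\nu_{T_n}(du)\,dy.
\]
Applying Fubini, which is justified because $\int|u|\,|\nu_{T_n}|(du)<\infty$ by the exponential tails and the $1/u$ singularity in \eqref{lcbg1}, and using $\widehat{f'}(y)=iy\widehat f(y)$, this equals $\int f'(x+u)u\,\nu_{T_n}(du)$. The sum of the two contributions is $Lf=\mathcal{A}f'$.

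The main obstacle is making this convergence hold in the topology of $\mathcal{S}$, and not just pointwise, so that $L$ is truly the generator on $\mathcal{S}(\mathbb{R})$. For this we would use the mean value theorem in $t$ on the Fourier side and again apply dominated convergence, bounding the $t$-derivatives of $e^{iyxe^{-t}}\phi_t(y)$ by polynomials in $x$ and $y$. The Schwartz decay of $\widehat f$ absorbs these polynomial factors, and the weights $x^m$ are handled by converting them into derivatives in $y$.
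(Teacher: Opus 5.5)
Your proposal is correct and follows essentially the same route as the paper, which gives no argument of its own beyond deferring to Proposition 3.8 and Lemma 3.10 of \cite{k1}. Your version carries out that Fourier-side verification (telescoping of $\phi_{T_n}(y)/\phi_{T_n}(e^{-t}y)$, dominated convergence, differentiation at $t=0$ via \eqref{PP2:e4} and Fubini), and your use of the rational product form \eqref{01cflcbg} usefully makes explicit why $P_t$ preserves $\mathcal{S}(\mathbb{R})$. The one step to tighten is continuity at $t_0>0$: left-continuity on the Fr\'echet space $\mathcal{S}(\mathbb{R})$ needs local equicontinuity of $\{P_s\}$, not just the semigroup law; it follows from your dominated-convergence argument, since the Schwartz bounds on $\widehat{P_tf}(y)=e^{t}\widehat f(e^{t}y)\phi_{T_n}(e^{t}y)/\phi_{T_n}(y)$ are uniform for $t$ in compact intervals.
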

\noindent
Following the steps similar to Proposition 3.8 and Lemma 3.10 of \cite{k1}, the proof follows.

\noindent
Next, we provide a solution of the Stein equation in \eqref{PP2:e15}.

\vfill
\begin{thm}\label{thmsol}
	Let $T_n$ be defined as in \eqref{defTn} and $h\in\mathcal{W}_{r}$, defined in \eqref{fs1}. Then the function $f_{h}:\mathbb{R}\to \mathbb{R}$ defined by 
	\begin{equation}\label{PP2:SolSe}
	f_{h}(x):=-\displaystyle
	\int_{0}^{\infty}\frac{d}{dx}P_{t}h(x)dt,
	\end{equation}
	solves \eqref{PP2:e15}.	
\end{thm}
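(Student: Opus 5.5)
The plan is Barbour's generator approach, carried out in the same way as in Arras and Houdr\'e \cite{k0} and \cite{k1}. By Proposition \ref{PP2:proSem}(ii), the generator of $(P_t)$ satisfies $Lg=\mathcal{A}g'$. It therefore suffices to show three things for $h\in\mathcal{W}_r$. First, $g_h:=-\int_0^\infty (P_th-\mathbb{E}h(T_n))\,dt$ is well defined and differentiable. Second, $Lg_h=h-\mathbb{E}h(T_n)$. Third, $g_h'=f_h$, where $f_h$ is the function in \eqref{PP2:SolSe}; the constant $\mathbb{E}h(T_n)$ disappears upon differentiation. Then $\mathcal{A}f_h=\mathcal{A}g_h'=Lg_h=h-\mathbb{E}h(T_n)$, which is \eqref{PP2:e15}.

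\textbf{Step 1 (convergence of the time integral).} Use the representation \eqref{PP2:a17}, $P_th(x)=\mathbb{E}\,h(xe^{-t}+X_{(t)})$. Self-decomposability gives $T_n\overset{d}{=}e^{-t}T_n'+X_{(t)}$, with $T_n'$ an independent copy of $T_n$. Hence
\[
|P_th(x)-\mathbb{E}h(T_n)|\le \|h'\|\,e^{-t}\big(|x|+\mathbb{E}|T_n|\big),
\]
which is finite because $T_n$ has all moments. This bound is integrable in $t$, so $g_h$ is well defined. Differentiating under the integral gives $\frac{d}{dx}P_th(x)=e^{-t}\mathbb{E}\,h'(xe^{-t}+X_{(t)})$, and this is bounded by $e^{-t}\|h'\|$. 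Dominated convergence therefore justifies interchanging $d/dx$ and $\int_0^\infty$, so $g_h'=f_h$ and $\|f_h\|\le\|h'\|$. Higher derivatives are handled the same way, each bounded by $e^{-kt}\|h^{(k)}\|$, so $f_h$ inherits $r-1$ bounded derivatives.

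\textbf{Step 2 (the generator identity).} For $h\in\mathcal{S}(\mathbb{R})$, the semigroup property gives $\frac{d}{dt}P_th=LP_th$. Integrating from $0$ to $T$ yields $P_Th-h=L\int_0^T P_th\,dt$, where $L$ commutes with the Bochner integral because $L$ is closed. Let $T\to\infty$. The left side tends to $\mathbb{E}h(T_n)-h$ pointwise by Step 1, since $P_Th(x)\to\mathbb{E}h(T_n)$. For the right side, write $L$ as $-xg'(x)+\int g'(x+u)u\,\nu_{T_n}(du)$. Since $\int|u|\,|\nu_{T_n}|(du)<\infty$ by \eqref{lcbg1}, the uniform derivative bounds from Step 1 give the convergence $L\int_0^T(P_th-\mathbb{E}h)\,dt\to Lg_h\cdot(-1)$ by dominated convergence. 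This yields $Lg_h=h-\mathbb{E}h(T_n)$. Extending from $\mathcal{S}(\mathbb{R})$ to general $h\in\mathcal{W}_r$ is done by approximation: mollify and truncate $h$ to obtain $h_m\in\mathcal{S}(\mathbb{R})$ with $h_m^{(k)}\to h^{(k)}$ locally uniformly and uniformly bounded. The bounds above make both sides of \eqref{PP2:e15} converge pointwise.

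\textbf{Main obstacle.} The delicate part is the limit $T\to\infty$ together with the extension beyond $\mathcal{S}(\mathbb{R})$. The semigroup is only defined on $\mathcal{S}$, while $h\in\mathcal{W}_r$ need not decay. I would avoid generator theory there and verify the identity directly from \eqref{PP2:a17}. Using Proposition \ref{th1} applied to the variable $xe^{-t}+X_{(t)}$ (whose law interpolates to $T_n$), one checks that $\partial_tP_th(x)=\mathcal{A}\big(\tfrac{d}{dx}P_th\big)(x)$ for any $h\in\mathcal{W}_1$, and then integrates in $t$ using the $e^{-t}$ bounds.
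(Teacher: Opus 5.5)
Your proposal is correct and follows the paper's route: set $g_h=-\int_0^\infty(P_th-\mathbb{E}h(T_n))\,dt$, note $g_h'=f_h$, and use $Lg=\mathcal{A}g'$ from Proposition \ref{PP2:proSem} to get $\mathcal{A}f_h=Lg_h=-\int_0^\infty\frac{d}{dt}P_th\,dt=h-\mathbb{E}h(T_n)$. Your Steps 1--2 (the $e^{-t}$ coupling bound from self-decomposability, dominated convergence, and approximation of $h\in\mathcal{W}_r$ by Schwartz functions) supply justifications that the paper leaves implicit; the only loose point is the aside in your last paragraph, since Proposition \ref{th1} is the Stein identity for the law of $T_n$, not of $xe^{-t}+X_{(t)}$, so the identity $\partial_tP_th=\mathcal{A}\big(\frac{d}{dx}P_th\big)$ should instead be checked by differentiating $e^{iyxe^{-t}}\phi_t(y)$ in $t$ in the Fourier form \eqref{PP2:e16}, although your approximation argument already makes that aside unnecessary.
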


\vspace{-.3cm}
\begin{proof}
	Let
	 $$g_{h}(x)=-\displaystyle\int_{0}^{\infty}\bigg(P_{t}(h)(x)-\mathbb{E}h(T_n)  \bigg)dt .$$
	
	\noindent
	Then $g_{h}^{\prime}(x)=f_{h}(x).$ Also from \eqref{e7}, we get
	\begin{align}
	\nonumber\mathcal{A}f_{h}(x)&=-xf_{h}(x)+\int_{\mathbb{R}} f_{h}(x+u)u\nu_{T_n} (du)=Lg_{h}(x) \\
	\nonumber&=-\displaystyle\int_{0}^{\infty}LP_{t}(h)(x)dt\\
	\nonumber&=-\displaystyle\int_{0}^{\infty}\frac{d}{dt}P_{t}h(x)dt\text{ (see \cite[p.68]{nourdin})}\\
	\nonumber&=P_{0}h(x)-P_{\infty}h(x)\\
	\nonumber&=h(x)-\mathbb{E}h(T_n)~(\text{by Proposition \ref{PP2:proSem}}).
	\end{align}
	\noindent
	Hence, $f_{h}$ is the solution to \eqref{PP2:e15}. 	
\end{proof}
\vspace{-0.4cm}
\noindent
Next, we obtain some regularity conditions of $f_{h}$. 
\vspace{-0.4cm}
\begin{thm}\label{th3}
	For $h\in\mathcal{W}_{k+1}$, let $f_{h}$ be defined in \eqref{PP2:SolSe}. Then
	\begin{align}\label{PP2:pr1}
	\|f_{h}^{(k)}\|\leq \frac{1}{k+1} ,~k\geq 0,
	\end{align}	
	where $f^{(k)}$, $k\geq 1$, denotes the $k$th derivative of $f$ with $f^{(0)}=f$. Also,
	\begin{align}
		|xf_{h}^{\prime}(x)| \leq 2+\frac{1}{2}|\mathbb{E}(T_n)| \text{ and }|xf_{h}^{\prime\prime}(x)| \leq 2+\frac{1}{3}|\mathbb{E}(T_n)|.
	\end{align}
\end{thm}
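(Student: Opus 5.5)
The plan is to work directly with the semigroup representation \eqref{PP2:a17}, $P_th(x)=\int_{\mathbb R}h(u+xe^{-t})F_{X_{(t)}}(du)$, which holds because $\phi_t$ in \eqref{nPP2:a15} is a characteristic function by self-decomposability. Differentiating under the integral sign, which is legitimate since $h\in\mathcal{W}_{k+1}$ has bounded derivatives up to order $k+1$, gives
\begin{align*}
\frac{d^{m}}{dx^{m}}P_th(x)=e^{-mt}\int_{\mathbb R}h^{(m)}(u+xe^{-t})F_{X_{(t)}}(du),\qquad m\le k+1 .
\end{align*}
By \eqref{PP2:SolSe}, $f_h^{(k)}(x)=-\int_0^\infty \frac{d^{k+1}}{dx^{k+1}}P_th(x)\,dt$. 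Using $\|h^{(k+1)}\|\le 1$ we get
\begin{align*}
|f_h^{(k)}(x)|\le\int_0^\infty e^{-(k+1)t}\,dt=\frac{1}{k+1},
\end{align*}
which is \eqref{PP2:pr1}. Exchanging derivative and $t$-integral is justified by this same integrable majorant $e^{-(k+1)t}$.

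For the weighted bounds, I will write $xf_h'(x)=-\int_0^\infty x e^{-2t}\int h''(u+xe^{-t})F_{X_{(t)}}(du)\,dt$ and remove the factor $x$ by an integration-by-parts device in $t$. Since $\frac{d}{dt}h'(u+xe^{-t})=-xe^{-t}h''(u+xe^{-t})$, we have
\begin{align*}
xe^{-2t}h''(u+xe^{-t})=-e^{-t}\frac{d}{dt}\big[h'(u+xe^{-t})\big].
\end{align*}
However, $u$ is distributed by $F_{X_{(t)}}$, which also depends on $t$, so the derivative does not pass cleanly through the expectation. The better route is to use the Stein equation itself. Applying \eqref{PP2:e15} to $f=f_h'$, or equivalently the generator identity \eqref{e7} applied to $f_h$ (obtained by differentiating the equation solved by $f_h$), gives
\begin{align*}
xf_h'(x)=-f_h(x)-h'(x)+\int_{\mathbb R}f_h'(x+u)\,u\,\nu_{T_n}(du).
\end{align*}
Then I bound each term. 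First, $|f_h|\le 1$. Second, $|h'|\le1$. Third,
\begin{align*}
\Big|\int_{\mathbb R}f_h'(x+u)\,u\,\nu_{T_n}(du)\Big|\le\|f_h'\|\int_{\mathbb R}|u|\,\nu_{T_n}(du).
\end{align*}

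This last term is the main obstacle. The stated bound involves $|\mathbb{E}(T_n)|=\big|\int u\,\nu_{T_n}(du)\big|$ rather than $\int|u|\,\nu_{T_n}(du)$. Note that the positive and negative parts of $\nu_{T_n}$ contribute with opposite signs, because $u\nu_{T_n}(du)$ has density $\sum_j(p_je^{-\alpha_ju/w_j^{(1)}}-\ldots)$. I expect to need either a more careful decomposition or simply to accept $\|f_h'\|\,|\mathbb{E}T_n|$ with $\|f_h'\|\le\frac12$, which gives the coefficient $\frac12$. The coefficient $\frac13$ in the second bound comes from $\|f_h''\|\le\frac13$ in the analogous identity obtained by differentiating once more:
\begin{align*}
xf_h''(x)=-2f_h'(x)-h''(x)+\int_{\mathbb R}f_h''(x+u)\,u\,\nu_{T_n}(du).
\end{align*}
Here $2\|f_h'\|+\|h''\|\le 1+1=2$, matching the constant $2$. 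In the first bound, $\|f_h\|+\|h'\|\le 1+1=2$, which also matches. So the constants $2$ confirm the approach, and the remaining step is to control the jump integral by the mean. I would first try to establish it as $\big|\mathbb{E}[f_h^{(k)}(x+Z)]\big|\,|\mathbb{E}T_n|$ via the size-bias representation of $u\nu_{T_n}(du)$, treating the positive and negative parts separately and then combining them.
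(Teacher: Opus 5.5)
Your bound $\|f_h^{(k)}\|\le\frac{1}{k+1}$ is correct and matches the paper's argument. The paper does $k=0,1$ and says ``by induction''. Your formula $\frac{d^m}{dx^m}P_th(x)=e^{-mt}\int h^{(m)}(u+xe^{-t})F_{X_{(t)}}(du)$, with the majorant $e^{-(k+1)t}$, is the cleaner way to say it. For the weighted bounds you also follow the paper: differentiate the Stein equation once and twice, then bound term by term. The constants $2$ come out as you say. Note that the second bound uses $\|f_h''\|\le\frac13$, so it needs $h\in\mathcal{W}_3$.

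The gap is exactly the step you flag, and you should not ``simply accept'' it, because this is where the paper's own proof goes wrong. The paper writes $\bigl|\int_{\mathbb{R}} f_h'(x+u)\,u\,\nu_{T_n}(du)\bigr|\le\|f_h'\|\,\bigl|\int_{\mathbb{R}}u\,\nu_{T_n}(du)\bigr|=\frac12|C_1(T_n)|$. But $u\,\nu_{T_n}(du)$ is a signed measure: its density is $\sum_j p_je^{-\alpha_ju/w_j^{(1)}}$ on $(0,\infty)$ and $-\sum_j q_je^{-\beta_j|u|/w_j^{(2)}}$ on $(-\infty,0)$. For a general bounded integrand, only $\|f_h'\|\int_{\mathbb{R}}|u|\,\nu_{T_n}(du)$ is available.

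The inequality genuinely fails for Stein solutions. Take the symmetric case $w_j^{(1)}=w_j^{(2)}$, $\alpha_j=\beta_j$, $p_j=q_j$, so that $\mathbb{E}(T_n)=0$ and the inequality would force the jump term to vanish identically. Evaluating your differentiated identity at $x=0$ would then give $f_h(0)=-h'(0)$, i.e. $\int_0^\infty e^{-t}\,\mathbb{E}\,h'(X_{(t)})\,dt=h'(0)$. This is false for $h'(z)=cz^2e^{-z^2}$ with small $c>0$, since $X_{(t)}$ is nondegenerate for $t>0$.

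Your size-bias idea fails for the same reason. $u\,\nu_{T_n}(du)/\mathbb{E}(T_n)$ is not a probability measure: its total variation is $\int|u|\,\nu_{T_n}(du)/|\mathbb{E}(T_n)|$, and it is undefined when $\mathbb{E}(T_n)=0$.

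What your route actually proves is
$|xf_h'(x)|\le 2+\frac12\sum_{j}\bigl(p_jw_j^{(1)}/\alpha_j+q_jw_j^{(2)}/\beta_j\bigr)$ and $|xf_h''(x)|\le 2+\frac13\sum_j\bigl(p_jw_j^{(1)}/\alpha_j+q_jw_j^{(2)}/\beta_j\bigr)$.
If you want $|\mathbb{E}(T_n)|$ to appear, split $f_h'(x+u)=f_h'(x)+\bigl(f_h'(x+u)-f_h'(x)\bigr)$. This gives $|xf_h'(x)|\le 2+\frac12|\mathbb{E}(T_n)|+\frac13\mathrm{Var}(T_n)$, and similarly $2+\frac13|\mathbb{E}(T_n)|+\frac14\mathrm{Var}(T_n)$ for $|xf_h''(x)|$ when $h\in\mathcal{W}_4$. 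The bounds exactly as stated, with $|\mathbb{E}(T_n)|$ alone, are not delivered by this argument. Neither your proposal nor the paper supplies the additional idea that would be needed.
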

\begin{proof}
	For $h\in \mathcal{W}_{k+1}$,
	\begin{align*}
	\|f_h\|&=\sup_{x\in\mathbb{R}} \left|-\displaystyle
	\int_{0}^{\infty}\frac{d}{dx}P_{t}h(x)dt\right|\\
	&=\sup_{x\in\mathbb{R}} \left| -\displaystyle
	\int_{0}^{\infty}e^{-t}\int_{\mathbb{R}}h^{(1)}(xe^{-t}+y)F_{X_{(t)}}(dy)dt \right|\\
	&\leq \|h^{(1)}\| \left|\int_{0}^{\infty}e^{-t}dt \right|
	= \|h^{(1)}\|\leq 1.
	\end{align*}
	\noindent
	Since $f_{h}$ is $k$-times differentiable, we have 
	\begin{align*}
	\|f_h^{(1)}\|&=\sup_{x\in\mathbb{R}} \left| -\displaystyle
	\int_{0}^{\infty}e^{-2t}\int_{\mathbb{R}}h^{(2)}(xe^{-t}+y)F_{X_{(t)}}(dy)dt \right|\\
	&\leq \|h^{(2)}\| \left|\int_{0}^{\infty}e^{-2t}dt \right|\\	
	&= \frac{\|h^{(2)}\|}{2} \leq \frac{1}{2}.
	\end{align*}
	\noindent
	 Indeed, it follows by induction that $\|f_{h}^{(k)}\|\leq \frac{1}{k+1},~k\geq 1.$

	\vskip 1ex
	\noindent
	Now differentiating both sides of \eqref{PP2:e15} gives
	\begin{align}\label{sed1}
		-xf^{\prime}(x)-f(x)+\int_{\mathbb{{R}}}f^{\prime}(x+u)u\nu_{T_n}(du)=h^{\prime}(x).
	\end{align}
	\noindent
	So,
	\begin{align*}
		|xf^{\prime}(x)|& \leq \|f\| +\|h^{\prime}\|+\left|\int_{\mathbb{{R}}}f^{\prime}(x+u)u\nu_{T_n}(du)\right|\\
		&\leq \left( 2 + \|f^\prime\|\left|\int_{\mathbb{{R}}}u\nu_{T_n}(du)\right| \right)\\
		& =2+\frac{1}{2}|C_1(T_n)| ~(\text{see } \eqref{cumu})\\
		&=2+\frac{1}{2}|\mathbb{E}(T_n)|.
	\end{align*}
	\noindent
	Again differentiating both sides of \eqref{sed1} gives
	\begin{align}
			-xf^{\prime\prime}(x)-2f^{\prime}(x)+\int_{\mathbb{{R}}}f^{\prime\prime}(x+u)u\nu_{T_n}(du)=h^{\prime\prime}(x).
	\end{align}
	\noindent
	So,
	\begin{align*}
		|xf^{\prime\prime}(x)| &\leq 2\|f^{\prime}\| +\|h^{\prime\prime}\|+ \|f^{\prime\prime}\|\left|\int_{\mathbb{{R}}}u\nu_{T_n}(du)\right|\\
		& \leq 2 +\frac{1}{3}|C_1(T_n)|\\
		&=2+\frac{1}{3}|\mathbb{E}(T_n)|.
	\end{align*}
	\noindent
	This proves the result.
\end{proof}
\section{Approximation of linear combinations of BG random variables}\label{BSBGD}
\noindent
In this section, we present bounds for the distributional approximations of linear combination of independent BG r.v.s to some probability distributions. We use Stein's method for probability approximations to derive our bounds.
\subsection{Approximation for sums}Our first result yields an error bound for the distributional approximation of two linear combinations of independent BG r.v.s. We refer the reader to \cite{Ransum2024}, \cite{roos2003} and \cite{PVBC} for a number of similar bounds for the total variation distance, between the distributions of two sums of non-negative integer-valued r.v.s. 
\begin{thm}\label{twosumsthm}
	For $w_j^{(1)},w_j^{(2)}, \pi_j^{(1)},\pi_j^{(2)}>0$ and $j\in \mathbb{N} \{ 0\}$, let $T_n^{(1)}=T_n= \sum_{j=1}^{n}(w_j^{(1)}X_j-w_j^{(2)}Y_j)$ and  $T_n^{(2)}= \sum_{j=1}^{n}(\pi_j^{(1)}X_j-\pi_j^{(2)}Y_j)$ be defined as in \eqref{defTn}. Then
	\begin{align}\label{twosumbd}
		d_W(T_n^{(1)},T_n^{(2)}) &\leq c_1 \sum_{j=1}^{n}\frac{p_j}{\sqrt{2\alpha_j}} \frac{ \big(w_j^{(1)} - \pi_{j}^{(1)}\big) }{\big(w_j^{(1)} + \pi_{j}^{(1)}\big)^{1/2}}+c_2\sum_{j=1}^{n}\frac{q_j}{\sqrt{2\beta_j}}\frac{\big(w_j^{(2)}-\pi_{j}^{(2)}\big)}{\big(w_j^{(2)}+\pi_{j}\big)^{1/2}},
	\end{align}
	where $c_1$ and $c_2$ are two positive constants.
\end{thm}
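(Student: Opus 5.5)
The plan is to use Stein's method with the characterization of Proposition \ref{th1}. We take $T_n^{(2)}$ as the target law and $T_n^{(1)}$ as the variable to be approximated; the roles can be swapped, which is why only $|w_j-\pi_j|$ should matter. Let $\nu^{(2)}$ denote the L\'evy measure \eqref{lcbg1} of $T_n^{(2)}$, and let $\nu^{(1)}=\nu_{T_n}$. For a $1$-Lipschitz $h$ we first smooth $h$, so that we may work in $\mathcal{W}_2$. We then pass to $d_W$ by a standard regularisation argument (convolving $h$ with a Gaussian kernel). We solve the Stein equation \eqref{PP2:e15} for $T_n^{(2)}$ with solution $f_h$ given by Theorem \ref{thmsol}. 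Theorem \ref{th3} then gives $\|f_h\|\le 1$ and $\|f_h'\|\le 1/2$. From \eqref{normal3},
\begin{align*}
\mathbb{E}h(T_n^{(1)})-\mathbb{E}h(T_n^{(2)})=\mathbb{E}\Big(-T_n^{(1)}f_h(T_n^{(1)})+\int_{\mathbb R}f_h(T_n^{(1)}+u)u\,\nu^{(2)}(du)\Big).
\end{align*}
By Proposition \ref{th1} applied to $T_n^{(1)}$ (extended from $\mathcal{S}(\mathbb R)$ to bounded Lipschitz $f$ by density), the term $\mathbb{E}\,T_n^{(1)}f_h(T_n^{(1)})$ equals $\mathbb{E}\int f_h(T_n^{(1)}+u)u\,\nu^{(1)}(du)$. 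Hence the difference reduces to
\begin{align*}
\mathbb{E}\int_{\mathbb R}f_h(T_n^{(1)}+u)\,u\,\big(\nu^{(2)}-\nu^{(1)}\big)(du).
\end{align*}

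Next we split according to \eqref{lcbg1} into the positive-jump parts, indexed by $j$ with weights $p_j$, and the negative-jump parts with weights $q_j$. The $u$ in the integrand cancels the $1/u$ in the densities. So the $j$-th positive contribution is
\begin{align*}
p_j\,\mathbb{E}\int_0^\infty f_h(T_n^{(1)}+u)\big(e^{-\alpha_j u/\pi_j^{(1)}}-e^{-\alpha_j u/w_j^{(1)}}\big)du,
\end{align*}
and similarly for the negative part with $q_j,\beta_j$. The crude bound $\|f_h\|\int_0^\infty|e^{-au}-e^{-bu}|du=\|f_h\|\,|w_j^{(1)}-\pi_j^{(1)}|/\alpha_j$ already gives a linear bound. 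To obtain the stated square-root form, we instead apply Cauchy--Schwarz in $u$:
\begin{align*}
\int_0^\infty|e^{-au}-e^{-bu}|du\le\Big(\int_0^\infty(e^{-au}-e^{-bu})^2du\Big)^{1/2}\cdot(\cdots).
\end{align*}
The key calculation is then
\begin{align*}
\int_0^\infty(e^{-au}-e^{-bu})^2du=\frac{1}{2a}+\frac{1}{2b}-\frac{2}{a+b}=\frac{(a-b)^2}{2ab(a+b)}.
\end{align*}
With $a=\alpha_j/w_j^{(1)}$ and $b=\alpha_j/\pi_j^{(1)}$, its square root is
\begin{align*}
\frac{|w_j^{(1)}-\pi_j^{(1)}|}{\sqrt{2\alpha_j}\,(w_j^{(1)}+\pi_j^{(1)})^{1/2}}\,(w_j^{(1)}\pi_j^{(1)})^{1/2}\alpha_j^{-1/2}\cdot\ldots,
\end{align*}
which, up to bounded factors, has exactly the form appearing in \eqref{twosumbd}.

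The remaining factor, an $L^2$ norm of $f_h(T_n^{(1)}+\cdot)$ against a weight, has to be controlled by a constant. I would use a weighted Cauchy--Schwarz, putting a factor $e^{-cu}$ on the $f_h$ side, so that the extra term is $\|f_h\|(2c)^{-1/2}$. The $w_j\pi_j$ and $\alpha_j$ factors are then absorbed into $c_1$; the same is done with $c_2$ for the negative-jump part. Summing over $j$ and taking the supremum over $h$ then yields \eqref{twosumbd}.

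The main obstacle is this last bookkeeping. Making the weighted Cauchy--Schwarz produce precisely $p_j(2\alpha_j)^{-1/2}(w_j^{(1)}-\pi_j^{(1)})(w_j^{(1)}+\pi_j^{(1)})^{-1/2}$ requires the constants to depend on the parameters, which is presumably allowed since $c_1,c_2$ are unspecified; the differences should also be read in absolute value. The smoothing step from $\mathcal{W}_2$ back to $d_W$ must also be done so that it does not worsen the rate, for instance by using that only $\|f_h\|\le\|h'\|$ is needed, which holds for Lipschitz $h$ directly from the semigroup representation.
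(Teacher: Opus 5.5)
Your argument is correct in outline, and its skeleton is the paper's: the Stein equation for $T_n^{(2)}$, Proposition~\ref{th1} applied to $T_n^{(1)}$, reduction to $\mathbb{E}\int f_h(T_n^{(1)}+u)\,u\,(\nu^{(2)}-\nu^{(1)})(du)$, splitting by $j$, and Cauchy--Schwarz in $u$. The real difference is how you control the factor carrying $f_h$.

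The paper asserts $\big(\mathbb{E}\int_0^\infty f^2(T_n^{(1)}\pm u)\,du\big)^{1/2}<c_i$ ``since $f\in\mathcal{S}(\mathbb{R})$''. That bound is not uniform over the test functions, so it does not survive the supremum. For $h(x)=x$ the semigroup formula gives $f_h\equiv-1$, so the integral is infinite. Compactly supported smooth $1$-Lipschitz truncations of $x$ make it arbitrarily large. Your use of $\|f_h\|\le\|h'\|\le 1$ is what actually makes the argument work.

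The price is that $c_1,c_2$ depend on the parameters, and this is unavoidable. If only $w_j^{(1)}\neq\pi_j^{(1)}$, then $T_n^{(1)}-T_n^{(2)}=(w_j^{(1)}-\pi_j^{(1)})X_j$ has constant sign. Hence $d_W=p_j|w_j^{(1)}-\pi_j^{(1)}|/\alpha_j$ exactly, which forces $c_1\ge\sqrt{2(w_j^{(1)}+\pi_j^{(1)})/\alpha_j}$. Similarly, rescaling all weights by $\lambda$ multiplies $d_W$ by $\lambda$ but the right side of \eqref{twosumbd} only by $\sqrt{\lambda}$. You are also right that the differences must be read in absolute value.

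Some of your bookkeeping should be corrected, and it can be simplified.
\begin{itemize}
\item The square root of $\int_0^\infty(e^{-au}-e^{-bu})^2du=(w_j^{(1)}-\pi_j^{(1)})^2/\big(2\alpha_j(w_j^{(1)}+\pi_j^{(1)})\big)$ is exactly the summand of \eqref{twosumbd}. The extra factors $(w_j^{(1)}\pi_j^{(1)})^{1/2}\alpha_j^{-1/2}$ you wrote are an algebra slip.
\item In the weighted Cauchy--Schwarz, the compensating weight $e^{cu}$ lands on the exponential factor. That gives $(a-b)^2/\big(2(a-c)(b-c)(a+b-2c)\big)$ with $c<\min(a,b)$, not the unweighted integral. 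It still works with $c=\min(a,b)/2$ at the cost of another parameter-dependent factor.
\item None of this is needed. Your crude estimate already gives $d_W(T_n^{(1)},T_n^{(2)})\le\sum_j p_j|w_j^{(1)}-\pi_j^{(1)}|/\alpha_j+\sum_j q_j|w_j^{(2)}-\pi_j^{(2)}|/\beta_j$.
\item That estimate is \eqref{twosumbd}, with absolute values, for $c_1=\max_j\sqrt{2(w_j^{(1)}+\pi_j^{(1)})/\alpha_j}$ and $c_2=\max_j\sqrt{2(w_j^{(2)}+\pi_j^{(2)})/\beta_j}$. By the example above, constants of this size cannot be avoided.
\item The Stein machinery, with its smoothing and unbounded-$h$ technicalities, can be dropped altogether. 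Coupling $T_n^{(1)}$ and $T_n^{(2)}$ through the same $X_j,Y_j$ gives $d_W\le\mathbb{E}|T_n^{(1)}-T_n^{(2)}|$, which is bounded by the same sum.
\end{itemize}
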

\vspace{-0.6cm}
 \begin{proof}
 	Let $(f,h)$ be the solution pair of \eqref{PP2:e15}. Then replacing $x$ by $T_n^{(1)} $ in the Stein equation and taking expectation, we get	
 	\begin{align}\label{ransum1}
 	\nonumber	\mathbb{E}[h(T_n^{(1)})]-\mathbb{E}[h(T_n^{(2)})]=&\mathbb{E}\left(-T_n^{(1)}f(T_n^{(1)})+\displaystyle\int_{\mathbb{R}}f(T_n^{(1)}+u)u\nu_{T_n^{(2)}}(du)  \right)\\
 \nonumber	=&\mathbb{E}\bigg[\left(-T_n^{(1)}f(T_n^{(1)})+\displaystyle\int_{\mathbb{R}}f(T_n^{(1)}+u)u\nu_{T_n^{(2)}}(du)  \right)\\
 	&\quad\quad-\left(-T_n^{(1)}f(T_n^{(1)})+\displaystyle\int_{\mathbb{R}}f(T_n^{(1)}+u)u\nu_{T_n^{(1)}}(du)  \right)\bigg],
 	\end{align}
 	\noindent
 	since $\mathbb{E}\left(-T_n^{(1)}f(T_n^{(1)})+\displaystyle\int_{\mathbb{R}}f(T_n^{(1)}+u)u\nu_{T_n^{(1)}}(du)  \right)=0, ~f\in\mathcal{S}(\mathbb{{R}}).$ So, for any $h\in \mathcal{M}_W$, \eqref{ransum1} can be written as
 	\begin{align}\label{2ransum}
 		\nonumber d_W(T_n^{(1)},T_n^{(2)}) &\leq \bigg|\mathbb{E}\displaystyle\int_{\mathbb{R}}f(T_n^{(1)}+u)u\nu_{T_n^{(2)}}(du)  -\displaystyle\int_{\mathbb{R}}f(T_n^{(1)}+u)u\nu_{T_n^{(1)}}(du)  \bigg|\\
 		\nonumber &=\bigg|\mathbb{E}\sum_{j=1}^{n}\bigg(p_j\displaystyle\int_{0}^{\infty}f(T_n^{(1)}+u)e^{-\alpha_j/\pi_{j}^{(1)} u}du\\
 		\nonumber &\quad - q_j\displaystyle\int_{0}^{\infty}f(T_n^{(1)}-u)e^{-\beta_j/\pi_{j}^{(2)} u}du\bigg)\\
 		\nonumber &\quad\quad -\sum_{j=1}^{n}\bigg(p_j\displaystyle\int_{0}^{\infty}f(T_n^{(1)}+u)e^{-\alpha_j/w_{j}^{(1)} u}du\\
 		\nonumber &\quad\quad\quad - q_j\displaystyle\int_{0}^{\infty}f(T_n^{(1)}-u)e^{-\beta_j/w_{j}^{(2)} u}du\bigg) \bigg|\\
 		\nonumber &\quad\quad\quad\quad\quad\quad\quad\quad\quad\quad\quad\quad(\text{split the measures } \nu_{T_n^{(1)}} \text{ and } \nu_{T_n^{(2)}})\\
 		\nonumber &=\bigg|\mathbb{E} \sum_{j=1}^{n}\bigg(p_j\displaystyle\int_{0}^{\infty}f(T_n^{(1)}+u)\left(e^{-\alpha_j/\pi_{j}^{(1)} u}-e^{-\alpha_j/w_{j}^{(1)} u}\right)du\\
 		\nonumber &\quad\quad\quad\quad\quad\quad-q_j\displaystyle\int_{0}^{\infty}f(T_n^{(1)}-u)\left(e^{-\beta_j/\pi_{j}^{(2)} u}-e^{-\beta_j/w_{j}^{(2)} u}\right)du\bigg) \bigg|\\
 		\nonumber &\leq\sum_{j=1}^{n}p_j \bigg|\bigg(\int_{0}^{\infty}\bigg(e^{-\alpha_j/\pi_{j}^{(1)} u}-e^{-\alpha_j/w_{j}^{(1)} u}\bigg)^2 du\bigg)^{1/2}\\
 		\nonumber&\quad\quad\quad \times
\bigg(\mathbb{E}\displaystyle\int_{0}^{\infty}f^{2}(T_n^{(1)}+u)du\bigg)^{1/2}\bigg|\\
\nonumber&\quad+\sum_{j=1}^{n}q_j \bigg|\bigg(\int_{0}^{\infty}\bigg(e^{-\beta_j/\pi_{j}^{(2)} u}-e^{-\beta_j/w_{j}^{(2)} u}\bigg)^2 du\bigg)^{1/2}\\
&\quad\quad\quad\quad\times
\bigg(\mathbb{E}\displaystyle\int_{0}^{\infty}f^{2}(T_n^{(1)}-u)du\bigg)^{1/2}\bigg|,
 \end{align}
 \noindent
 where the last inequality follows by the Cauchy-Schwartz inequality.
 
  Next, observe that, since $f\in\mathcal{S}(\mathbb{R})$, there exists two positive constants $c_1$ and $c_2$ such that $\bigg(\mathbb{E}\displaystyle\int_{0}^{\infty}f^{2}(T_n^{(1)}-u)du\bigg)^{1/2}<c_1$ and $\bigg(\mathbb{E}\displaystyle\int_{0}^{\infty}f^{2}(T_n^{(1)}-u)du\bigg)^{1/2}<c_2$, respectively. So, \eqref{2ransum} can be written as
 \begin{align*}
 d_W(T_n^{(1)},T_n^{(2)})& \leq c_1 \sum_{j=1}^{n}p_j \bigg|\bigg(\int_{0}^{\infty}\bigg(e^{-\alpha_j/\pi_{j}^{(1)} u}-e^{-\alpha_j/w_{j}^{(1)} u}\bigg)^2 du\bigg)^{\frac{1}{2}}\bigg|\\
 &\quad+c_2\sum_{j=1}^{n}q_j \bigg|\bigg(\int_{0}^{\infty}\bigg(e^{-\beta_j/\pi_{j}^{(2)} u}-e^{-\beta_j/w_{j}^{(2)} u}\bigg)^2 du\bigg)^{\frac{1}{2}}\bigg|\\
 &=c_1 \sum_{j=1}^{n}\frac{p_j}{\sqrt{\alpha_j}}\bigg|\frac{w_j^{(1)} + \pi_{j}^{(1)}}{2}-2\frac{w_j^{(1)}\pi_{j}^{(1)} }{w_j^{(1)}+\pi_{j}^{(1)}}  \bigg|^{1/2}\\
 &\quad\quad\quad + c_2\sum_{j=1}^{n}\frac{q_j}{\sqrt{\beta_j}}\bigg|\frac{w_j^{(2)}+\pi_{j}^{(2)}}{2}-2\frac{w_j^{(2)}\pi_{j}^{(2)} }{w_j^{(2)}+\pi_{j}^{(2)}}  \bigg|^{1/2}\\
 &=c_1 \sum_{j=1}^{n}\frac{p_j}{\sqrt{2\alpha_j}} \frac{ \big(w_j^{(1)} - \pi_{j}^{(1)}\big) }{\big(w_j^{(1)} + \pi_{j}^{(1)}\big)^{1/2}}+c_2\sum_{j=1}^{n}\frac{q_j}{\sqrt{2\beta_j}}\frac{\big(w_j^{(2)}-\pi_{j}^{(2)}\big)}{\big(w_j^{(2)}+\pi_{j}\big)^{1/2}}.
 \end{align*}
 This proves the result.	
 \end{proof}

\begin{cor}
	Under the assumption of Theorem \ref{twosumsthm}. Let $S_n^{(1)}= \sum_{j=1}^{n}w_j^{(1)}X_j$ and  $S_n^{(2)}= \sum_{j=1}^{n}\pi_j^{(1)}X_j$. If $\beta_j \to \infty$, as $n\to \infty$, then
	\begin{align*}
	d_W(S_n^{(1)},S_n^{(2)}) \leq c_1 \sum_{j=1}^{n}\frac{p_j}{\sqrt{2\alpha_j}} \frac{ \big(w_j^{(1)} - \pi_{j}^{(1)}\big) }{\big(w_j^{(1)} + \pi_{j}^{(1)}\big)^{1/2}},
	\end{align*}
	where $c_1$ is some positive constant.
\end{cor}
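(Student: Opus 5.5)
The plan is to derive the corollary as the limiting case $\beta_j \to \infty$ of Theorem \ref{twosumsthm}. I will not redo the Stein argument from scratch.

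\textbf{Step 1: the negative parts vanish.} First I check that $T_n^{(i)} \to S_n^{(i)}$ in a sense strong enough for the Wasserstein distance to pass to the limit. Since $Y_j \sim Ga(\beta_j,q_j)$, we have
\begin{align*}
\mathbb{E}\Big|\sum_{j=1}^{n}w_j^{(2)}Y_j\Big| = \sum_{j=1}^{n} w_j^{(2)}\frac{q_j}{\beta_j} \to 0
\end{align*}
as $\beta_j\to\infty$ for fixed $n$, and the same holds with $\pi_j^{(2)}$ in place of $w_j^{(2)}$. The coupling $T_n^{(1)} = S_n^{(1)} - \sum_j w_j^{(2)}Y_j$ then gives $d_W(T_n^{(1)},S_n^{(1)}) \le \sum_j w_j^{(2)}q_j/\beta_j$. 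Similarly $d_W(T_n^{(2)},S_n^{(2)}) \le \sum_j \pi_j^{(2)}q_j/\beta_j$.

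\textbf{Step 2: triangle inequality.} Combining Step 1 with the triangle inequality,
\begin{align*}
d_W(S_n^{(1)},S_n^{(2)}) \le d_W(T_n^{(1)},T_n^{(2)}) + \sum_{j=1}^{n}\big(w_j^{(2)}+\pi_j^{(2)}\big)\frac{q_j}{\beta_j}.
\end{align*}
I then bound $d_W(T_n^{(1)},T_n^{(2)})$ by \eqref{twosumbd}.

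\textbf{Step 3: passing to the limit.} The second sum in \eqref{twosumbd} carries the factor $q_j/\sqrt{2\beta_j}$, so it also tends to $0$ as $\beta_j\to\infty$, provided $c_2$ stays bounded. Letting $\beta_j\to\infty$, and noting that the left side does not depend on the $\beta_j$, leaves exactly the first sum with constant $c_1$.

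\textbf{Main obstacle.} The delicate point is whether $c_1$ and $c_2$ are uniform in the $\beta_j$. They come from the Schwartz-type bounds on the Stein solution $f$ and on $\mathbb{E}\int_0^\infty f^2(T_n^{(1)}\pm u)\,du$.

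If that uniformity is doubtful, I would instead rerun the proof of Theorem \ref{twosumsthm} directly for $S_n^{(1)}$ versus $S_n^{(2)}$. The L\'evy measure $\nu_{S_n}$ has only the positive part of \eqref{lcbg1}, and Proposition \ref{th1} and Theorems \ref{thmsol}--\ref{th3} apply verbatim, since $S_n$ is self-decomposable with finite mean. The Cauchy--Schwarz step then produces only the $p_j$ terms, with the same $L^2$ computation
\begin{align*}
\int_0^\infty\Big(e^{-\alpha_j u/\pi_j^{(1)}}-e^{-\alpha_j u/w_j^{(1)}}\Big)^2 du,
\end{align*}
and yields the stated bound directly. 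No $\beta_j$ enters, so no uniformity issue arises.
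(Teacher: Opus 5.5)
Your main argument is the paper's: let $\beta_j\to\infty$ in \eqref{twosumbd} so the $q_j/\sqrt{2\beta_j}$ sum disappears. Your coupling bounds $d_W(T_n^{(1)},S_n^{(1)})\le\sum_j w_j^{(2)}q_j/\beta_j$ and $d_W(T_n^{(2)},S_n^{(2)})\le\sum_j \pi_j^{(2)}q_j/\beta_j$, together with the triangle inequality, justify that limit more completely than the paper's appeal to \cite[Theorem 7.12]{Villani}, which also needs convergence of first moments and not only convergence in law; as you point out, both versions also silently need $c_1,c_2$ to stay bounded as $\beta_j\to\infty$.

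That worry disappears if you apply your coupling directly to the two gamma sums: $d_W(S_n^{(1)},S_n^{(2)})\le\mathbb{E}\bigl|\sum_{j}(w_j^{(1)}-\pi_j^{(1)})X_j\bigr|\le\sum_{j}p_j|w_j^{(1)}-\pi_j^{(1)}|/\alpha_j$. This implies the stated bound with $c_1=\max_j\sqrt{2(w_j^{(1)}+\pi_j^{(1)})/\alpha_j}$, read with $|w_j^{(1)}-\pi_j^{(1)}|$ in place of the signed difference (the absolute value is also what the $L^2$ computation actually yields). So neither the limit nor the Stein fallback is needed.
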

\begin{proof}
Note from \eqref{01cflcbg} that, if $\beta_j \to \infty$, as $n\to \infty$,
\begin{align*}
T_n^{(1)} \overset{\mathfrak{L}}{\to} S_n^{(1)} \text{ and }T_n^{(2)} \overset{\mathfrak{L}}{\to} S_n^{(2)}.
\end{align*}
Also, it follows \cite[Theorem 7.12]{Villani} that, if $T_n^{(1)} \overset{\mathfrak{L}}{\to} S_n^{(1)} \text{ and }T_n^{(2)} \overset{\mathfrak{L}}{\to} S_n^{(2)} \text{ as }\beta_j \to \infty,$
\begin{align*}
d_W(S_n^{(1)},S_n^{(2)})=\lim_{\beta_j \to \infty}d_W(T_n^{(1)},T_n^{(2)}).
\end{align*}
\noindent
Applying Theorem \ref{twosumsthm} and taking the limit as $\beta_j \to \infty$, we get from
\eqref{twosumbd}
\begin{align*}
d_W(S_n^{(1)},S_n^{(2)}) &\leq \lim_{\beta_j \to \infty} \bigg(c_1 \sum_{j=1}^{n}\frac{p_j}{\sqrt{2\alpha_j}} \frac{ \big(w_j^{(1)} - \pi_{j}^{(1)}\big) }{\big(w_j^{(1)} + \pi_{j}^{(1)}\big)^{1/2}}+c_2\sum_{j=1}^{n}\frac{q_j}{\sqrt{2\beta_j}}\frac{\big(w_j^{(2)}-\pi_{j}^{(2)}\big)}{\big(w_j^{(2)}+\pi_{j}\big)^{1/2}} \bigg) \\
 &=c_1 \sum_{j=1}^{n}\frac{p_j}{\sqrt{2\alpha_j}} \frac{ \big(w_j^{(1)} - \pi_{j}^{(1)}\big) }{\big(w_j^{(1)} + \pi_{j}^{(1)}\big)^{1/2}},
\end{align*}
\noindent
which proves the result.
\end{proof}
\begin{rem}
 Note that if $\alpha_j \to \infty,$ as $n\to \infty$, then $d_W(S_n^{(1)},S_n^{(2)}) \to 0.$
 Also, if $w_j^{(1)}=\pi_j^{(1)},$ as $n\to \infty$, then $d_W(S_n^{(1)},S_n^{(2)} ) \to 0$, as expected.
\end{rem}
\subsection{Limit of compound Poisson distributions}Next, we obtain, for the Kolmogorov distance $d_K$, the error bounds for a sequence of compound Poisson distributions that converges to the distribution of linear combination of BG r.v.s. Indeed, a more general result is shown to be true  in Theorem 1.2.18 of \cite{apple2009}, which shows an infinitely divisible distribution is a limit of compound Poisson distributions. Since $T_n$ is infinitely divisible, our result also follows from their result. However, Lemma \ref{approxthm1} provides the rate of convergence of this result. Before stating our result, let $Z_m$ be a compound Poisson r.v.s with cf (see \cite[Theorem 1.2.18]{apple2009})
	\begin{align}\label{cpdcf00}
	\phi_{m}(z):= \exp\bigg(m \left( \phi_{T_n}^{\frac{1}{m}}(z)-1  \right)   \bigg),~z\in\mathbb{{R}}, ~m\geq 1,
	\end{align}
	where $\phi_{T_n}(z)$ is given in \eqref{lcbg2}.  
\begin{lem}\label{approxthm1}
	Let $T_n$ be defined as in \eqref{defTn}. Then
	\begin{align}\label{Kbound1}
	d_{K}(Z_m,T_n) \leq c \bigg(\sum_{j=1}^{2}|C_{j}(T_n)|\bigg)^{\frac{2}{5}} \bigg(\frac{1}{m}\bigg)^{\frac{1}{5}},~m\geq 1,
	\end{align}
	where $c>0$ is some positive constant.
\end{lem}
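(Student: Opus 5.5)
Since $\phi_{T_n}^{1/m}$ is the characteristic function of $T_n^{(1/m)}$, the random variable $Z_m$ is a compound Poisson sum $\sum_{i=1}^{N_m}\xi_i$. Here $N_m\sim$ Poisson$(m)$ and the $\xi_i$ are i.i.d.\ copies of the $1/m$-th convolution root of $T_n$. The Lévy measure of $Z_m$ is therefore $m F_{T_n^{(1/m)}}(du)$, while that of $T_n$ is $\nu_{T_n}$ from \eqref{lcbg1}. The plan is to bound first a smooth Wasserstein distance $d_2(Z_m,T_n)$ and then pass to $d_K$ with the comparison inequality of Section \ref{PP2:FS}. That inequality with $r=2$ gives $d_K\le c\,(3\sqrt2)^{1/2} d_2^{1/4}$. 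Strictly, an exponent $1/5$ in $1/m$ together with $2/5$ on the cumulants suggests interpolating differently, via $d_K\le c\sqrt{d_1}$ and a bound for $d_1$ obtained from the Fourier side. I would first aim for a bound of the form $d_1(Z_m,T_n)\le C\big(|C_1(T_n)|+|C_2(T_n)|\big)^{4/5} m^{-2/5}$. The stated rate then follows from $d_K\le c\sqrt{d_1}$, and the constant is absorbed into $c$.

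The key step is the comparison at the level of characteristic functions. Writing $\psi(z)=\log\phi_{T_n}(z)=\int(e^{izu}-1)\nu_{T_n}(du)$, we have
\begin{align*}
\log\phi_m(z)-\psi(z)=m\big(e^{\psi(z)/m}-1-\psi(z)/m\big),
\end{align*}
so $|\log\phi_m(z)-\psi(z)|\le |\psi(z)|^2/(2m)$ whenever $\mathrm{Re}\,\psi\le 0$, which holds. Using $|e^{izu}-1-izu|\le z^2u^2/2$ together with \eqref{cumu}, I would bound $|\psi(z)|\le |z|\,|C_1(T_n)|+z^2 C_2(T_n)/2$. Since both characteristic functions have modulus at most one,
\begin{align*}
|\phi_m(z)-\phi_{T_n}(z)|\le \frac{\big(|z||C_1(T_n)|+z^2C_2(T_n)\big)^2}{2m}.
\end{align*}

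Next I would convert this into a bound on $|\mathbb{E}h(Z_m)-\mathbb{E}h(T_n)|$ for $h\in\mathcal W_1$. One route uses the Stein machinery: the identity of Proposition \ref{th1}, applied to $Z_m$ with its own Lévy measure, combined with the solution $f_h$ of Theorem \ref{thmsol} and the bounds of Theorem \ref{th3}. The difference then becomes $\mathbb{E}\int f_h(Z_m+u)u(\nu_{T_n}-mF_{T_n^{(1/m)}})(du)$. That signed measure is small only in a weak sense, so I prefer a direct smoothing route. Convolve $h$ with a Gaussian kernel of width $\varepsilon$, which costs $O(\varepsilon)$ in $d_1$. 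For the smoothed function, Parseval gives an error of order $\int |\hat h_\varepsilon(z)|\,|\phi_m-\phi_{T_n}|\,dz$. With the polynomial bound above and the Gaussian damping $e^{-\varepsilon^2z^2/2}$, this is of order $m^{-1}(|C_1|+|C_2|)^2\varepsilon^{-5}$ up to constants. Optimizing $\varepsilon$ in $\varepsilon+K\varepsilon^{-5}/m$ gives the fractional powers of $m^{-1}$ and of the cumulants. I would then adjust the exponents to match \eqref{Kbound1}, possibly by going directly to $d_K$ with the Esseen smoothing inequality instead of through $d_1$.

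The main obstacle is this last step: making the smoothing and optimization produce exactly the exponents $2/5$ and $1/5$ in \eqref{Kbound1}. This needs control of the $z$-growth of $|\phi_m-\phi_{T_n}|$ and a Fourier integrability estimate on the test function. If the crude Parseval bound gives a worse exponent, I would use Esseen's inequality $d_K\le \frac1\pi\int_{-R}^{R}|\phi_m-\phi_{T_n}|/|z|\,dz+C\|h_{T_n}\|_\infty/R$, with the density of $T_n$ bounded via \eqref{denlcbg0}, and choose $R$ to balance the two terms. Any dependence on $\|h_{T_n}\|_\infty$ or on the parameters would be absorbed into the unspecified constant $c$.
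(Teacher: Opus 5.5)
Your characteristic-function estimate is correct. Both $\log\phi_m=m(e^{\psi/m}-1)$ and $\psi$ have nonpositive real part, so $|e^{w}-1-w|\le|w|^2/2$ and $|e^a-e^b|\le|a-b|$ give $|\phi_m(z)-\phi_{T_n}(z)|\le(|z||C_1(T_n)|+z^2C_2(T_n))^2/(2m)$.

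The route you put first does not close, however, and the step you flag as the main obstacle is where it fails. For $h\in\mathcal W_1$ (bounded, Lipschitz, not integrable), $\widehat h$ and $\widehat{h_\varepsilon}$ are only tempered distributions. So $\int|\widehat{h_\varepsilon}|\,|\phi_m-\phi_{T_n}|\,dz$ is not available as written. Even granting it, your own count $\varepsilon+\varepsilon^{-5}(|C_1|+C_2)^2/m$ gives $d_1$ of order $m^{-1/6}$, and then $d_K\le c\sqrt{d_1}$ gives only $m^{-1/12}$. Reaching $m^{-1/5}$ through $d_K\le c\sqrt{d_1}$ would need $d_1\lesssim m^{-2/5}$; through $d_2$ it would need $d_2\lesssim m^{-4/5}$. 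Nothing in your sketch provides either, and ``adjusting the exponents'' is not a step.

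The Esseen fallback in your last paragraph does give \eqref{Kbound1} directly, with no loss. Put $K:=|C_1(T_n)|+C_2(T_n)$ and $M:=\|h_{T_n}\|_\infty$. For $R\ge1$ and $|z|\le R$ you have $(|z||C_1|+z^2C_2)^2/|z|=|z|(|C_1|+|z|C_2)^2\le|z|R^2K^2$. Hence
\[
d_K(Z_m,T_n)\le\frac{1}{\pi}\int_{-R}^{R}\frac{|\phi_m(z)-\phi_{T_n}(z)|}{|z|}\,dz+\frac{24M}{\pi R}\le\frac{R^4K^2}{2\pi m}+\frac{24M}{\pi R}.
\]
Choosing $R=(Mm/K^2)^{1/5}$ gives $d_K(Z_m,T_n)\le\frac{49}{2\pi}M^{4/5}K^{2/5}m^{-1/5}$. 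If this $R$ is below $1$, then $K^{2/5}m^{-1/5}>M^{1/5}$, and the trivial bound $d_K\le1$ suffices once $c\ge M^{-1/5}$. This is exactly \eqref{Kbound1}, with $c$ depending only on $M$. You should drop the $d_2$/$d_1$ detour and commit to this.

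This is a genuinely different proof from the paper's. The paper never estimates $\phi_m-\phi_{T_n}$. It quotes Proposition 4.11 of \cite{k0}, a Stein's-method bound with exponents $2/(\varrho+4)$ and $1/(\varrho+4)$. These become $2/5$ and $1/5$ once the growth condition $|\phi_{T_n}(z)|\int_0^{|z|}ds/|\phi_{T_n}(s)|\le C|z|^{\varrho}$ is checked with $\varrho=1$. That condition does hold, because $|\phi_{T_n}(s)|$ is nonincreasing in $|s|$. The paper's stated reason, $\sup_s|\int(e^{isu}-1)\nu_{T_n}(du)|<\infty$, is wrong: $\nu_{T_n}(\mathbb R)=\infty$, so the real part of that integral tends to $-\infty$.

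The citation is shorter and stays inside the paper's Stein framework. Your argument is self-contained and elementary, and it makes the constant explicit.

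Both routes need a bounded density for $T_n$. Rather than \eqref{denlcbg0}, use $M\le\frac{1}{2\pi}\|\phi_{T_n}\|_{L^1}$. This is finite when $p+q=\sum_j(p_j+q_j)>1$, since $|\phi_{T_n}(z)|\asymp|z|^{-(p+q)}$ as $|z|\to\infty$. For $p+q<1$ the density is unbounded at the origin, a restriction the paper also leaves implicit.
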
 

\vfill
\begin{proof}
	Let $b=\int_{-1}^{1}u\nu_{T_n}(du)$, where $\nu_{T_n}$ is defined in \eqref{lcbg1}. Then by Equation (2.6) of \cite{k0}, the distribution of $T_n$ is an infinitely divisible distribution with the triplet $b,0$ and $\nu_{T_n}$. Note that the distribution of $T_n$ is absolutely continuous with respect to the L\'evy measure with a bounded density and $\mathbb{E}|T_n|^2 < \infty$ (see \eqref{momentformula}). Recall from Proposition 4.11 of \cite{k0} that, if $T_n\sim ID(b,0,\nu_{T_n})$, that is, $T_n$ follows an infinitely divisible distribution with triplet $(b,0,\nu_{T_n})$ with cf $\phi_{T_n}$, and $Z_m$'s are compound Poisson r.v.s each of with cf as \eqref{cpdcf00}, then
	\begin{align}\label{kolbdd}
	d_{K}(Z_m,T_n)&\leq c\bigg( |\mathbb{E}[T_n]|+\int_{\mathbb{R}}u^2\nu_{T_n}(du)  \bigg)^{\frac{2}{p+4}}\bigg(\frac{1}{m}\bigg)^{\frac{1}{p+4}},
	\end{align}
	\noindent
	where $|\phi_{T_n}(z)| \displaystyle\int_{0}^{|z|}\frac{ds}{|\phi_{T_n}(s)|} \leq e^{c_0} |z|^p,$ $p\geq 1$.
	
	\noindent
	Now observe that
	\begin{align*}
		c_0:=\sup_{s\in\mathbb{R}}\bigg|\displaystyle\int_{\mathbb{{R}}}(e^{isu} -1)\nu_{T_n}(du)  \bigg|< \infty.
	\end{align*}
	\noindent
	So,
	\begin{align*}
	|\phi_{T_n}(z)| \displaystyle\int_{0}^{|z|}\frac{ds}{|\phi_{T_n}(s)|}
	&\leq e^{c_0} \displaystyle\int_{0}^{|z|}ds\\
	&=e^{c_0}|z|.
	\end{align*}
	\noindent
	Hence by \eqref{kolbdd}, for $p=1$, we get
	\begin{align*}
	d_{K}(Z_m,T_n)&\leq c\bigg( |\mathbb{E}[T_n]|+\int_{\mathbb{R}}u^2\nu_{T_n}(du)  \bigg)^{\frac{2}{5}}\bigg(\frac{1}{m}\bigg)^{\frac{1}{5}}\\
	&= c \bigg(\sum_{j=1}^{2}|C_{j}(T_n)|\bigg)^{\frac{2}{5}} \bigg(\frac{1}{m}\bigg)^{\frac{1}{5}}, 
	\end{align*}
	\noindent
	$\text{ since }\int_{\mathbb{R}}u^2\nu_{T_n}(du)=C_2(T_n).$ This proves the result.
\end{proof}
\begin{rem}
Note that if $m \to \infty$, $d_{K}(Z_m,T_n)\to 0$, and so the distribution of $T_n$ is the limit of a compound Poisson distributions. 
\end{rem}
\vspace*{-0.7cm}
\subsection{Approximation for bilateral gamma distributions}Our next result presents an error bound for the distributional approximation of linear combination of independent BG r.v.s to a BG distribution. We refer the reader to \cite{dalygaunt2016} and \cite{leyreisw2017} for a number of similar bounds for comparisons of univariate distributions. 
\noindent
Let $f$ denote the solution of the Stein equation  \eqref{PP2:e15}. The proof of Theorem \ref{ApproxTHM} relies on the bounds for $|xf^{(k)}(x)|$, $k = 1, 2$, established in Theorem \ref{th3}. Define $g_n:=\prod_{j=1}^{n}\alpha_j \beta_j$,  $h_n:=g_n\sum_{j=1}^{n} (w_j^{(1)}w_j^{(2)}+|w_j^{(1)}\beta_j-w_j^{(2)}\alpha_j |  )/\alpha_j\beta_j$ and
$\kappa_n:=\frac{g_n}{g_n-h_n}.$
\vspace*{-0.4cm}
\begin{thm}\label{ApproxTHM}
Let $T_n$ be defined as in \eqref{defTn} such that $g_n> h_n$.
Also, let $Z_{bg} \sim BG(\alpha,p,\beta,q) $. Then,
\begin{align}\label{stsdbd}
\nonumber	d_{3}(T_n,Z_{bg}) & \leq \left(2+\frac{1}{3}|\mathbb{E}(T_n)|\right)\kappa_n \bigg|\sum_{j=1}^{n}\frac{w_{j}^{(1)}w_{j}^{(2)} }{\alpha_j \beta_j}- \frac{1}{\alpha \beta}\bigg|\\
\nonumber&\quad\quad+\left(2+\frac{1}{2}|\mathbb{E}(T_n)|\right)\kappa_n\bigg| \sum_{j=1}^{n}\left(\frac{w_{j}^{(1)} }{\alpha_j}-\frac{w_{j}^{(2)}}{\beta_j} \right)  -\bigg(\frac{1}{\alpha}-\frac{1}{\beta}\bigg) \bigg|\\
&\quad\quad\quad+\frac{1}{2}\kappa_n\left| \sum_{j=1}^{n}\frac{w_{j}^{(1)}w_{j}^{(2)}(p_j+q_j)}{\alpha_j \beta_j} -\frac{p+ q}{\alpha \beta} \right|+\kappa_n\left|\mathbb{E}(T_n) -\mathbb{E}(Z_{bg}) \right|. 
	\end{align}
\end{thm}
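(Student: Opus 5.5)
The plan is to compare the two Stein characterizations in the Fourier domain and to use the Stein equation \eqref{PP2:e15} for $T_n$, with the regularity bounds of Theorem~\ref{th3}. Fix $h\in\mathcal{W}_3$ and let $f=f_h$ be the solution of \eqref{PP2:e15} from Theorem~\ref{thmsol}. By Theorem~\ref{th3}, $\|f\|\le 1$, $\|f'\|\le\frac12$, $\|f''\|\le\frac13$, $|xf'(x)|\le 2+\frac12|\mathbb{E}(T_n)|$ and $|xf''(x)|\le 2+\frac13|\mathbb{E}(T_n)|$. Substituting $Z_{bg}$ into \eqref{PP2:e15} gives
\[
\mathbb{E}h(Z_{bg})-\mathbb{E}h(T_n)=\mathbb{E}\Big(-Z_{bg}f(Z_{bg})+\int_{\mathbb{R}}f(Z_{bg}+u)\,u\,\nu_{T_n}(du)\Big).
\]
Everything then reduces to rewriting the right-hand side as a combination of $Z_{bg}f''$, $Z_{bg}f'$, $f'$ and $f$ with small coefficients.

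\emph{Step 1: a differential Stein operator for $Z_{bg}$.} From \eqref{bicf0}, $(\alpha-iz)(\beta+iz)\phi_{bg}'(z)=i\big(p(\beta+iz)-q(\alpha-iz)\big)\phi_{bg}(z)$. Translating powers of $iz$ into derivatives, as in the converse part of Proposition~\ref{th1}, gives for $g\in\mathcal{S}(\mathbb{R})$
\[
\mathbb{E}\Big[\tfrac{1}{\alpha\beta}Z_{bg}g''(Z_{bg})+\Big(\tfrac{p+q}{\alpha\beta}+\big(\tfrac1\alpha-\tfrac1\beta\big)Z_{bg}\Big)g'(Z_{bg})+\Big(\tfrac p\alpha-\tfrac q\beta-Z_{bg}\Big)g(Z_{bg})\Big]=0 .
\]

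\emph{Step 2: the analogous identity for $T_n$.} Starting from the product form \eqref{01cflcbg} of $\phi_{T_n}$, I would carry out the same manipulation factor by factor. The $j$-th factor contributes a quadratic $(\alpha_j-izw_j^{(1)})(\beta_j+izw_j^{(2)})$. Dividing by $g_n=\prod_j\alpha_j\beta_j$ and keeping only the terms linear in each factor produces the aggregated coefficients
\[
\sum_j\frac{w_j^{(1)}w_j^{(2)}}{\alpha_j\beta_j},\qquad \sum_j\Big(\frac{w_j^{(1)}}{\alpha_j}-\frac{w_j^{(2)}}{\beta_j}\Big),\qquad \sum_j\frac{w_j^{(1)}w_j^{(2)}(p_j+q_j)}{\alpha_j\beta_j},
\]
together with $\mathbb{E}(T_n)=C_1(T_n)$. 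What remains is a remainder operator $R$ acting on $f$, whose Fourier multiplier, after normalization by $g_n$, is bounded by $h_n/g_n<1$. This bound holds because the cross terms are controlled by $w_j^{(1)}w_j^{(2)}+|w_j^{(1)}\beta_j-w_j^{(2)}\alpha_j|$, which is exactly how $h_n$ is defined.

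\emph{Step 3: subtract and bound.} Subtracting the identity of Step~1, applied with $g=f$, from the representation of $\mathbb{E}h(Z_{bg})-\mathbb{E}h(T_n)$ obtained in Step~2 leaves four coefficient differences:
\[
\frac{1}{\alpha\beta}\ \text{vs}\ \sum_j\frac{w_j^{(1)}w_j^{(2)}}{\alpha_j\beta_j},\qquad
\frac1\alpha-\frac1\beta\ \text{vs}\ \sum_j\Big(\frac{w_j^{(1)}}{\alpha_j}-\frac{w_j^{(2)}}{\beta_j}\Big),\qquad
\frac{p+q}{\alpha\beta}\ \text{vs}\ \sum_j\frac{w_j^{(1)}w_j^{(2)}(p_j+q_j)}{\alpha_j\beta_j},\qquad
\mathbb{E}(Z_{bg})\ \text{vs}\ \mathbb{E}(T_n).
\]
These multiply $xf''$, $xf'$, $f'$ and $f$ respectively, and are bounded using the estimates of Theorem~\ref{th3} quoted above. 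The remainder $R$ is absorbed by the geometric series $\sum_{k\ge0}(h_n/g_n)^k=\kappa_n$, which is where the factor $\kappa_n$ in front of each term comes from. Taking the supremum over $h\in\mathcal{W}_3$ then gives \eqref{stsdbd}.

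The main obstacle is Step~2: showing rigorously that the higher-order products in the $n$ factors can be collected into a remainder whose operator norm, on the relevant functions built from $f$, is at most $h_n/g_n$. Only then can the Neumann-series inversion yield exactly $\kappa_n=g_n/(g_n-h_n)$. I would first attempt this bound on the Fourier side for $f\in\mathcal{S}(\mathbb{R})$, and then extend by density to the $f_h$ of Theorem~\ref{thmsol}.
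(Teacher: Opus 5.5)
There is a genuine gap, and it is Step~2, which you flag yourself. As sketched, it cannot yield \eqref{stsdbd}.

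\textbf{What Step~2 needs and what your manipulation gives.} The quantity to control is $\mathbb{E}\mathcal{A}f(Z_{bg})$, an expectation under the law of $Z_{bg}$. So you need an exact rewriting of $\int_{\mathbb{R}}f(x+u)\,u\,\nu_{T_n}(du)$ valid for \emph{every} $x$. Repeating the Step~1 manipulation on \eqref{01cflcbg} (clearing denominators in $\phi_{T_n}'/\phi_{T_n}$) gives something else: a differential identity of order up to $2n$ that holds only in expectation under $T_n$. It does not rewrite $\mathcal{A}f$ pointwise, so it cannot be evaluated at $Z_{bg}$.

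\textbf{Where $\kappa_n$ comes from.} Your account of $\kappa_n$ is also off. Write $c_j=\frac{w_j^{(1)}}{\alpha_j}-\frac{w_j^{(2)}}{\beta_j}$ and $d_j=\frac{w_j^{(1)}w_j^{(2)}}{\alpha_j\beta_j}$, so the normalized product is $\prod_j(1-izc_j+z^2d_j)$. Its cross terms form a polynomial symbol of degree up to $2n$. That symbol is unbounded in $z$, and it involves $f^{(k)}$ and $xf^{(k)}$ for $k$ up to $2n$, which Theorem~\ref{th3} does not control for $h\in\mathcal{W}_3$. The cross-term coefficients are products of at least two of the $c_j,d_j$. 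By contrast, $h_n/g_n=\sum_j(|c_j|+d_j)$ is exactly the size of the \emph{first-order} coefficients you keep. Indeed, for $n=1$ there are no cross terms at all, yet the factor $\kappa_n>1$ is still present in \eqref{stsdbd}. Finally, a norm bound on a remainder $Rf$ only gives an additive error. It is not a multiple of $d_3(T_n,Z_{bg})$, so there is no recursion for a Neumann series to sum.

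\textbf{The missing idea.} What you need is the paper's self-bounding recursion through \eqref{PP2:e15} itself.
\begin{enumerate}
\item Split $\nu_{T_n}$ into its $2n$ exponential pieces and integrate by parts twice in $u$. The first pass yields $\mathbb{E}(T_n)f(x)$ and the second yields $\sum_j d_j(p_j+q_j)f'(x)$.
\item What is left is $\sum_j c_j\int f'(x+u)u\,\nu_j(du)+\sum_j d_j\int f''(x+u)u\,\nu_j(du)$, with $\nu_j$ as after \eqref{lccf1}.
\item These leftovers are fed back into the Stein equation for $T_n$ via $\int f^{(k)}(x+u)u\,\nu_{T_n}(du)=xf^{(k)}(x)+\tilde h_k(x)-\mathbb{E}\tilde h_k(T_n)$, $k=1,2$. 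This produces at once the $xf'$ and $xf''$ terms to be matched against $\mathcal{A}_{bg}$, and error terms $\mathbb{E}\tilde h_k(Z_{bg})-\mathbb{E}\tilde h_k(T_n)$, each bounded by $d_3(T_n,Z_{bg})$.
\item Taking the supremum gives $d_3\le(\cdots)+\frac{h_n}{g_n}\,d_3$. Since $g_n>h_n$, rearranging produces $\kappa_n$. No Fourier or operator-norm estimate is involved.
\end{enumerate}

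Be aware that the paper executes this step loosely. It replaces $\nu_j$ by $\nu_{T_n}$ inside the weighted sums, which is not an identity for $n\ge 2$. It also takes for granted that $\tilde h_1,\tilde h_2\in\mathcal{W}_3$, whereas differentiating \eqref{PP2:e15} gives $\tilde h_1=h'+f$, which need not be. Still, this is the mechanism behind $\kappa_n$.

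Your setup, your Step~1 (which reproduces the operator $\mathcal{A}_{bg}$ the paper takes from \cite{BUV1}), and your Step~3 agree with the paper.
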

\vspace{-1.12cm}
\begin{proof}  
Let $(f,h)$ be the solution pair of \eqref{PP2:e15}. Then replacing $x$ by $Z_{bg} \sim BG(\alpha,p,\beta,q)$ in the Stein equation and taking expectation, we get
	\begin{align}
	\nonumber	\mathbb{E}(h(Z_{bg}))-\mathbb{E}(h(T_n))=&\mathbb{E}\left(-Z_{bg}f(Z_{bg})+\displaystyle\int_{\mathbb{R}}f(Z_{bg}+u)u\nu_{T_n}(du)  \right)\\
	\nonumber=&\mathbb{E} \bigg(-Z_{bg}f(Z_{bg})+\sum_{j=1}^{n}\bigg(p_j \displaystyle\int_{0}^{\infty}f(Z_{bg}+u)e^{-\alpha_j/w_{j}^{(1)} u}du\\
	\nonumber	
	&\quad -  q_j \displaystyle\int_{0}^{\infty}f(Z_{bg}-u)e^{-\beta_j/w_{j}^{(2)} u}du \bigg)\bigg)~~\text{(split the  measure $\nu_{T_n}$)}\\
	\nonumber=& \mathbb{E}\bigg( \left(  \sum_{j=1}^{n}\left(\frac{w_{j}^{(1)}p_j}{\alpha_j}-\frac{w_{j}^{(2)}q_j}{\beta_j}   \right) -Z_{bg} \right)f(Z_{bg})\bigg)\\
\nonumber	&+\sum_{j=1}^{n}\frac{w_{j}^{(1)}p_j}{\alpha_j}\mathbb{E} \left(\displaystyle\int_{0}^{\infty}f^{\prime}(Z_{bg}+u)e^{-\alpha_j/w_{j}^{(1)} u} du \right)\\
	\label{ibpf1}&\quad\quad+\sum_{j=1}^{n}\frac{w_{j}^{(2)}q_j}{\beta_j}\mathbb{E} \left(\displaystyle\int_{0}^{\infty}f^{\prime}(Z_{bg}-u)e^{-\beta_j/w_{j}^{(2)} u} du \right), 
	\end{align}
	\noindent
	where the last equality follows by the integration by parts formula. Note that
	\begin{align}\label{ibpf2}
	\nonumber&\sum_{j=1}^{n}\mathbb{E}\bigg(p_j \int_{0}^{\infty}f^{\prime}(Z_{bg}+u)e^{-\alpha_j/w_{j}^{(1)} u}du-	q_j\int_{0}^{\infty}f^{\prime}(Z_{bg}-u)e^{-\beta_j/w_{j}^{(2)} u}du\bigg)\\
	&\quad\quad\quad=\mathbb{E}\bigg(\displaystyle\int_{\mathbb R}uf^{\prime}(Z_{bg}+u)\nu_{T_n}(du)\bigg).
	\end{align}
	Using \eqref{ibpf2} in \eqref{ibpf1}, we get
	\begin{align}
	\nonumber\mathbb{E}(h(Z_{bg}))-\mathbb{E}(h(T_n))=&\mathbb{E} \bigg(\left(  \mathbb{E}(T_n) -Z_{bg} \right)f(Z_{bg})\bigg)\\
\nonumber	&+\sum_{j=1}^{n}\bigg(\left(\frac{w_{j}^{(1)} }{\alpha_j}-\frac{w_{j}^{(2)}}{\beta_j} \right)\mathbb{E} \bigg( p_j \int_{0}^{\infty}f^{\prime}(Z_{bg}+u)e^{-\alpha_j/w_{j}^{(1)} u}du\\
	\nonumber&\quad-	q_j\int_{0}^{\infty}f^{\prime}(Z_{bg}-u)e^{-\beta_j/w_{j}^{(2)} u}du \bigg)\\
	\nonumber&\quad\quad\quad+\frac{w_{j}^{(2)}p_j}{\beta_j}\mathbb{E} \left(\displaystyle\int_{0}^{\infty}f^{\prime}(Z_{bg}+u)e^{-\alpha_j/w_{j}^{(1)} u} du \right)\\
	\nonumber&\quad\quad\quad\quad\quad\quad+\frac{w_{j}^{(1)}q_j}{\alpha_j}\mathbb{E} \left(\displaystyle\int_{0}^{\infty}f^{\prime}(Z_{bg}-u)e^{-\beta_j/w_{j}^{(2)} u} du \right)\bigg)\\
	\nonumber=&\mathbb{E} \bigg(\left(  \mathbb{E}(T_n) -Z_{bg} \right)f(Z_{bg})\bigg)\\
\nonumber	&+\sum_{j=1}^{n}\left(\frac{w_{j}^{(1)} }{\alpha_j}-\frac{w_{j}^{(2)}}{\beta_j} \right)\mathbb{E} \left( \int_{\mathbb{R}} f^{\prime}(Z_{bg}+u) u\nu_{T_n}(du) \right)\\
	\nonumber&\quad\quad\quad+\sum_{j=1}^{n}\frac{w_{j}^{(2)}p_j}{\beta_j}\mathbb{E} \left(\displaystyle\int_{0}^{\infty}f^{\prime}(Z_{bg}+u)e^{-\alpha_j/w_{j}^{(1)} u} du \right)\\
	\label{ibpf3}&\quad\quad\quad\quad\quad\quad+\sum_{j=1}^{n}\frac{w_{j}^{(1)}q_j}{\alpha_j}\mathbb{E} \left(\displaystyle\int_{0}^{\infty}f^{\prime}(Z_{bg}-u)e^{-\beta_j/w_{j}^{(2)} u} du \right).
	\end{align}
	\noindent
	Applying integration by parts formula to the last two terms of \eqref{ibpf3}, we get	
	\begin{align}
	\nonumber\mathbb{E}(h(Z_{bg}))-\mathbb{E}(h(T_n))=&\mathbb{E} \bigg(\left(  \mathbb{E}(T_n) -Z_{bg} \right)f(Z_{bg})\bigg)\\
\nonumber	&\quad+\sum_{j=1}^{n}\bigg(\left(\frac{w_{j}^{(1)} }{\alpha_j}-\frac{w_{j}^{(2)}}{\beta_j} \right)\mathbb{E} \left( \int_{\mathbb{R}} f^{\prime}(Z_{bg}+u) u\nu_{T_n}(du) \right)\\
	\nonumber&\quad\quad+\frac{w_{j}^{(1)}w_{j}^{(2)}(p_j+q_j)}{\alpha_j \beta_j}\mathbb{E}\left(f^{\prime}(Z_{bg}) \right)\\
	\nonumber&\quad\quad+	\frac{w_{j}^{(1)}w_{j}^{(2)}}{\alpha_j \beta_j}\mathbb{E}\bigg(p_j \int_{0}^{\infty}f^{\prime\prime}(Z_{bg}+u)e^{-\alpha_j/w_{j}^{(1)} u}du\\
	\nonumber&\quad\quad\quad\quad-q_j\int_{0}^{\infty}f^{\prime\prime}(Z_{bg}-u)e^{-\beta_j/w_{j}^{(2)} u}du\bigg)\bigg)\\
	\nonumber =&\mathbb{E} \bigg(\left(  \mathbb{E}(T_n) -Z_{bg} \right)f(Z_{bg})\bigg)\\
	\nonumber	&\quad+\sum_{j=1}^{n}\bigg(\left(\frac{w_{j}^{(1)} }{\alpha_j}-\frac{w_{j}^{(2)}}{\beta_j} \right)\mathbb{E} \left( \int_{\mathbb{R}} f^{\prime}(Z_{bg}+u) u\nu_{T_n}(du) \right)\\
	\nonumber&\quad\quad+\frac{w_{j}^{(1)}w_{j}^{(2)}(p_j+q_j)}{\alpha_j \beta_j}\mathbb{E}\left(f^{\prime}(Z_{bg}) \right)\\
&\quad\quad+	\frac{w_{j}^{(1)}w_{j}^{(2)}}{\alpha_j \beta_j}\mathbb{E}\bigg(\displaystyle\int_{\mathbb{{R}}}f^{\prime\prime}(Z_{bg}+u)u\nu_{T_n}(du) \bigg)\bigg).\label{tt0}
	\end{align}
	\noindent
	Let $h_1,h_2 \in \mathcal{W}_3$ be two associated test functions with $f^\prime$ and $f^{\prime\prime}$ satisfy the Stein equation \eqref{PP2:e15}. Then from \eqref{tt0}, we get
	\begin{align}
	\nonumber\mathbb{E}(h(Z_{bg}))-\mathbb{E}(h(T_n)) =&\mathbb{E}\bigg(	\sum_{j=1}^{n}\frac{w_{j}^{(1)}w_{j}^{(2)}}{\alpha_j \beta_j}Z_{bg}f^{\prime\prime}(Z_{bg})+ \sum_{j=1}^{n}\bigg(\frac{w_{j}^{(1)}w_{j}^{(2)}(p_j+q_j)}{\alpha_j \beta_j}Z_{bg}\\
\nonumber	&+ \left(\frac{w_{j}^{(1)} }{\alpha_j}-\frac{w_{j}^{(2)}}{\beta_j} \right)\bigg)f^{\prime}(Z_{bg})+\left(  \mathbb{E}(T_n) -Z_{bg} \right)f(Z_{bg})  \bigg)\\
	\nonumber&\quad +\sum_{j=1}^{n}\left(\frac{w_{j}^{(1)} }{\alpha_j}-\frac{w_{j}^{(2)}}{\beta_j} \right)\bigg(\mathbb{E}(h_1(Z_{bg}))-\mathbb{E}(h_1(T_n))\bigg)\\
	&\quad\quad\quad+ \sum_{j=1}^{n}\frac{w_{j}^{(1)}w_{j}^{(2)}}{\alpha_j \beta_j} \bigg(\mathbb{E}(h_2(Z_{bg}))-\mathbb{E}(h_2(T_n))\bigg).\label{ibpf5}
	\end{align}
	\noindent
	Taking supremum over the functions $h \in \mathcal{W}_3,$ we set
\begin{align}
	\nonumber d_3(T_n,Z_{bg}) \leq ~&\mathbb{E}\bigg(	\sum_{j=1}^{n}\frac{w_{j}^{(1)}w_{j}^{(2)}}{\alpha_j \beta_j}Z_{bg}f^{\prime\prime}(Z_{bg})+ \sum_{j=1}^{n}\bigg(\frac{w_{j}^{(1)}w_{j}^{(2)}(p_j+q_j)}{\alpha_j \beta_j}Z_{bg}\\
	\nonumber	&+ \left(\frac{w_{j}^{(1)} }{\alpha_j}-\frac{w_{j}^{(2)}}{\beta_j} \right)\bigg)f^{\prime}(Z_{bg})+\left(  \mathbb{E}(T_n) -Z_{bg} \right)f(Z_{bg})  \bigg)\\
	\nonumber&\quad +\sum_{j=1}^{n}\left|\frac{w_{j}^{(1)} }{\alpha_j}-\frac{w_{j}^{(2)}}{\beta_j} \right| d_3(T_n,Z_{bg})\\
	&\quad\quad\quad+ \sum_{j=1}^{n}\frac{w_{j}^{(1)}w_{j}^{(2)}}{\alpha_j \beta_j}  d_3(T_n,Z_{bg}).\label{ibpf7}
	\end{align}
	\noindent
	That is,
	\begin{align}
	\nonumber&\left(1 - \sum_{j=1}^{n}\frac{|w_{j}^{(1)}\beta_j-w_{j}^{(2)}\alpha_j|+w_j^{(1)} w_j^{(2)}}{\alpha_j \beta_j} \right)d_{3}(T_n,Z_{bg})\\	
\nonumber	&\quad \leq\bigg| \mathbb{E}\bigg(	\sum_{j=1}^{n}\frac{w_{j}^{(1)}w_{j}^{(2)}}{\alpha_j \beta_j}Z_{bg}f^{\prime\prime}(Z_{bg})+ \sum_{j=1}^{n}\bigg(\frac{w_{j}^{(1)}w_{j}^{(2)}(p_j+q_j)}{\alpha_j \beta_j}Z_{bg}\\
	&\quad\quad+ \left(\frac{w_{j}^{(1)} }{\alpha_j}-\frac{w_{j}^{(2)}}{\beta_j} \right)\bigg)f^{\prime}(Z_{bg})+\left(  \mathbb{E}(T_n) -Z_{bg} \right)f(Z_{bg})  \bigg)\bigg|.\label{tt1}
	\end{align}
\noindent
Let $\mathcal{A}_{bg}$ denote a Stein operator of $Z_{bg}\sim BG(\alpha,p,\beta,q)$. Then
\begin{align}
	\mathbb{E}\left(\mathcal{A}_{bg}f(Z_{bg}) \right)=0,~f\in \mathcal{S}(\mathbb{{R}}),
\end{align}	
where $\mathcal{A}_{bg}f(x)=\frac{1}{\alpha \beta}xf^{\prime\prime}(x)+ \left(\frac{p+q}{\alpha\beta}+ \left(\frac{1}{\alpha}-\frac{1}{\beta} \right)x   \right)f^{\prime}(x)+\left(  \mathbb{E}(Z_{bg}) -x \right)f(x)$, see \cite[Example 5.3]{BUV1}. Then, from \eqref{tt1},
\begin{align}
\nonumber&\left(1 - \sum_{j=1}^{n}\frac{|w_{j}^{(1)}\beta_j-w_{j}^{(2)}\alpha_j|+w_j^{(1)} w_j^{(2)}}{\alpha_j \beta_j} \right)d_{3}(T_n,Z_{bg})\\	
\nonumber	&\quad \leq\bigg| \mathbb{E}\bigg(	\sum_{j=1}^{n}\frac{w_{j}^{(1)}w_{j}^{(2)}}{\alpha_j \beta_j}Z_{bg}f^{\prime\prime}(Z_{bg})+ \sum_{j=1}^{n}\bigg(\frac{w_{j}^{(1)}w_{j}^{(2)}(p_j+q_j)}{\alpha_j \beta_j}Z_{bg}\\
\nonumber	&\quad\quad+ \left(\frac{w_{j}^{(1)} }{\alpha_j}-\frac{w_{j}^{(2)}}{\beta_j} \right)\bigg)f^{\prime}(Z_{bg})+\left(  \mathbb{E}(T_n) -Z_{bg} \right)f(Z_{bg})  \bigg)- \mathbb{E}\bigg(\frac{1}{\alpha \beta}Z_{bg}f^{\prime\prime}(Z_{bg})\\
\nonumber&\quad\quad\quad\quad+ \left(\frac{p+q}{\alpha\beta}+ \left(\frac{1}{\alpha}-\frac{1}{\beta} \right)Z_{bg}   \right)f^{\prime}(Z_{bg})+\left(  \mathbb{E}(Z_{bg}) -Z_{bg} \right)f(Z_{bg})  \bigg) \bigg|\\
\nonumber&\quad=\bigg|\mathbb{E}\bigg( \left( \sum_{j=1}^{n}\frac{w_{j}^{(1)}w_{j}^{(2)} }{\alpha_j \beta_j}- \frac{1}{\alpha \beta} \right)Z_{bg}f^{\prime\prime}(Z_{bg}) \\
\nonumber &\quad+ \left(\sum_{j=1}^{n}\left(\frac{w_{j}^{(1)} }{\alpha_j}-\frac{w_{j}^{(2)}}{\beta_j} \right)  -\bigg(\frac{1}{\alpha}-\frac{1}{\beta}\bigg)\right)Z_{bg}f^{\prime}(Z_{bg}) \bigg) \\
\nonumber&\quad\quad+\left( \sum_{j=1}^{n}\frac{w_{j}^{(1)}w_{j}^{(2)}(p_j+q_j)}{\alpha_j \beta_j} -\frac{p+ q}{\alpha \beta} \right)f^{\prime}(Z_{bg})+ \left(\mathbb{E}(T_n) -\mathbb{E}(Z_{bg}) \right)f(Z_{bg}) \bigg|\\
\nonumber& \leq \bigg|\sum_{j=1}^{n}\frac{w_{j}^{(1)}w_{j}^{(2)} }{\alpha_j \beta_j}- \frac{1}{\alpha \beta}\bigg|\|xf^{\prime\prime}(x)\|+\bigg| \sum_{j=1}^{n}\left(\frac{w_{j}^{(1)} }{\alpha_j}-\frac{w_{j}^{(2)}}{\beta_j} \right)  -\bigg(\frac{1}{\alpha}-\frac{1}{\beta}\bigg) \bigg|\|xf^{\prime}(x)\|\\
\nonumber &\quad\quad\quad+\left| \sum_{j=1}^{n}\frac{w_{j}^{(1)}w_{j}^{(2)}(p_j+q_j)}{\alpha_j \beta_j} -\frac{p+ q}{\alpha \beta} \right|\|f^{\prime}\|+\left|\mathbb{E}(T_n) -\mathbb{E}(Z_{bg}) \right|\|f\|,
\end{align}
\noindent
hence,
\begin{align}\label{lastbd}
\nonumber d_3(T_n,Z_{bg})&\leq \kappa_n \bigg(\bigg|\sum_{j=1}^{n}\frac{w_{j}^{(1)}w_{j}^{(2)} }{\alpha_j \beta_j}- \frac{1}{\alpha \beta}\bigg|\|xf^{\prime\prime}(x)\|+\bigg| \sum_{j=1}^{n}\left(\frac{w_{j}^{(1)} }{\alpha_j}-\frac{w_{j}^{(2)}}{\beta_j} \right) \\
\nonumber&\quad\quad
 -\bigg(\frac{1}{\alpha}-\frac{1}{\beta}\bigg) \bigg|\|xf^{\prime}(x)\|+\left| \sum_{j=1}^{n}\frac{w_{j}^{(1)}w_{j}^{(2)}(p_j+q_j)}{\alpha_j \beta_j} -\frac{p+ q}{\alpha \beta} \right|\|f^{\prime}\|\\
 &\quad\quad\quad\quad+\left|\mathbb{E}(T_n) -\mathbb{E}(Z_{bg}) \right|\|f\|  \bigg).
\end{align}
\noindent
Using the estimates of Theorem \ref{th3} to bound \eqref{lastbd} yields \eqref{stsdbd}.
\end{proof}
\begin{rem}
 Note that if $w_j^{(1)}=w_j^{(2)}=1$, $n=1$, $\alpha_j \to \alpha,$ $\beta_j \to \beta$, $p_j \to p$ and $q_j \to q$, then by \eqref{stsdbd}, $d_3(T_1,Z_{bg}) \to 0$, as expected.
\end{rem}
\vspace{-0.5cm}
\noindent
We now have the following corollary for VG approximation to the linear combinations of independent BG r.v.s. The VG approximation for sums of independent r.v.s has been widely studied in the literature; see, for instance, \cite{Gaunt2014, Gaunt2020, Gaunt2022}.
\vspace{-0.5cm}
\begin{cor}\label{ApproxTHM1}
Let $T_n$ be defined as in \eqref{defTn} such that $g_n> h_n$.
Also let $Z_{vg} \sim VG (\alpha,\beta,p)$. Then,
\begin{align}\label{stsdbdvg}
\nonumber d_{3}(T_n,Z_{vg}) & \leq \left(2+\frac{1}{3}|\mathbb{E}(T_n)|\right)\kappa_n\bigg|\sum_{j=1}^{n}\frac{w_{j}^{(1)}w_{j}^{(2)} }{\alpha_j \beta_j}- \frac{1}{\alpha \beta}\bigg|\\
\nonumber &\quad\quad+\left(2+\frac{1}{2}|\mathbb{E}(T_n)|\right)\kappa_n\bigg| \sum_{j=1}^{n}\left(\frac{w_{j}^{(1)} }{\alpha_j}-\frac{w_{j}^{(2)}}{\beta_j} \right)  -\bigg(\frac{1}{\alpha}-\frac{1}{\beta}\bigg) \bigg|\\
&\quad\quad\quad+\frac{1}{2}\kappa_n\left| \sum_{j=1}^{n}\frac{w_{j}^{(1)}w_{j}^{(2)}(p_j+q_j)}{\alpha_j \beta_j} -\frac{2p}{\alpha \beta} \right|+\kappa_n\left|\mathbb{E}(T_n) -\mathbb{E}(Z_{vg}) \right|.
\end{align}
\end{cor}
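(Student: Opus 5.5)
This corollary is the special case $q=p$ of Theorem \ref{ApproxTHM}, so the plan is to specialize rather than rerun the Stein argument. By item (i) of the list in Section \ref{prebg}, $VG(\alpha,\beta,p)\overset{d}{=}BG(\alpha,p,\beta,p)$. Hence $Z_{vg}$ has the same law as $Z_{bg}\sim BG(\alpha,p,\beta,q)$ with $q=p$, and $d_3(T_n,Z_{vg})=d_3(T_n,Z_{bg})$, since $d_3$ depends only on the laws involved.

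Next, I would check that the hypotheses of Theorem \ref{ApproxTHM} are unchanged. The condition $g_n>h_n$ and the constant $\kappa_n$ depend only on $T_n$, that is, on the $\alpha_j,\beta_j,w_j^{(1)},w_j^{(2)}$, and not on the target law. Likewise, the bounds from Theorem \ref{th3} on $\|f\|$, $\|f'\|$, $|xf'(x)|$ and $|xf''(x)|$ depend only on $\mathbb{E}(T_n)$. So the whole bound \eqref{stsdbd} carries over verbatim once $q=p$ is substituted.

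Substituting $q=p$ into \eqref{stsdbd}, the first two terms do not involve $p,q$ and are unchanged. The third term becomes $\frac12\kappa_n\big|\sum_j \frac{w_j^{(1)}w_j^{(2)}(p_j+q_j)}{\alpha_j\beta_j}-\frac{2p}{\alpha\beta}\big|$. The last term becomes $\kappa_n|\mathbb{E}(T_n)-\mathbb{E}(Z_{vg})|$, where $\mathbb{E}(Z_{vg})=p/\alpha-p/\beta$ by \eqref{cuforbg}. Together these give exactly \eqref{stsdbdvg}.

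The one point needing care is notation. In the corollary, $p$ denotes the shape parameter of the VG target, not the sum $\sum_j p_j$ from Section \ref{nota}. The same reading is already implicit in Theorem \ref{ApproxTHM}, and I would follow it. I would also confirm that the Stein operator $\mathcal{A}_{bg}$ cited from \cite{BUV1} remains valid at $q=p$, which it does since it holds for all positive parameters. No genuine obstacle is expected beyond this bookkeeping.
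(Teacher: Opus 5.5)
Your proposal is correct and matches how the paper obtains this corollary. The paper states it without a separate proof as the specialization $q=p$ of Theorem~\ref{ApproxTHM}, via $BG(\alpha,p,\beta,p)\overset{d}{=}VG(\alpha,\beta,p)$, which turns $\frac{p+q}{\alpha\beta}$ into $\frac{2p}{\alpha\beta}$ and leaves $\kappa_n$ and the Stein-solution bounds (which depend only on $T_n$) unchanged. Your reading of $p$ as the free VG shape parameter is consistent with how $\alpha,\beta,p,q$ are used in Theorem~\ref{ApproxTHM} and the remark after it.
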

\begin{rem}
Note that if $w_j^{(1)}=w_j^{(2)}=1$, $p_j=q_j=p_n$, $\alpha_j=\beta_j=\sqrt{n} \alpha$, $\alpha=\beta$, and $Var(T_n) \to Var (Z_{vg})$, then by \eqref{stsdbdvg}, $d_3(T_n,Z_{vg}) \to 0$ as $n \to \infty$. 
\end{rem}
\vspace{-0.5cm}
\noindent
The following example establishes an explicit error bound for the normal approximation of linear combinations of BG r.v.s.
\vspace{-0.6cm}
\begin{exmp}
	Let $Z_p\sim SVG(\sqrt{2p}/\sigma, p)$ and $Z_\sigma \sim \mathcal{N}(0,\sigma^2) $. Recall from Section \ref{prebg} that,
$SVG(\sqrt{2p}/\sigma, p) \overset{d}{=}BG(\sqrt{2p}/\sigma, p,\sqrt{2p}/\sigma, p)$. Then, the cf of $SVG(\sqrt{2p}/\sigma, p)$ is
\begin{align}
\label{svgcf000}	\phi_{sv}(z)&= \bigg(1+ \frac{z^2 \sigma^2}{2p} \bigg)^{-p},~z\in\mathbb{R}\\
&=\exp\bigg(\displaystyle\int_{\mathbb{R}} (e^{izu}-1)\nu_{sv}(du)  \bigg),~z\in\mathbb{{R}},
\end{align}  
\noindent
where the L\'evy measure $\nu_{sv}$ is 
\begin{align*}
\nu_{sv}(du)=\bigg(\frac{p}{u}e^{- \frac{\sqrt{2p}}{\sigma} u}\textbf{1}_{(0,\infty)}(u)+ \frac{p}{|u|}e^{- \frac{\sqrt{2p}}{\sigma} |u|}\textbf{1}_{(-\infty,0)}(u) \bigg).
\end{align*}
\noindent
Note from \eqref{svgcf000} that, 
\begin{align*}
\lim_{p\to\infty}\phi_{sv}(z)=e^{-\frac{\sigma^2 z^2}{2}}.
\end{align*}
\noindent
That is, $Z_p \overset{L}{\to} Z_\sigma \sim \mathcal{N}(0,\sigma^2),$ as $p\to\infty.$ Also, it follows \cite[Theorem 7.12]{Villani} that, if $Z_p \overset{L}{\to} Z_\sigma,$ as $p \to \infty$,

\begin{align}\label{svgcf1100}
d_{3}(T_n,Z_\sigma)= \lim_{p \to \infty}d_{3}(T_n,Z_p).
\end{align}
\noindent
Applying Theorem \ref{ApproxTHM} to $Z_{bg} = Z_p$, and taking the limit as $p\to \infty$, we get from \eqref{stsdbd}
	\begin{align}\label{bgclt}
\nonumber	d_{3}(T_n,Z_\sigma) & \leq \lim_{p\to\infty}\bigg(  \left(2+\frac{1}{3}|\mathbb{E}(T_n)|\right)\kappa_n \bigg|\sum_{j=1}^{n}\frac{w_{j}^{(1)}w_{j}^{(2)} }{\alpha_j \beta_j}- \frac{\sigma^2}{2p}\bigg|\\
\nonumber &\quad\quad+\left(2+\frac{1}{2}|\mathbb{E}(T_n)|\right)\kappa_n\bigg| \sum_{j=1}^{n}\left(\frac{w_{j}^{(1)} }{\alpha_j}-\frac{w_{j}^{(2)}}{\beta_j} \right)\bigg|\\
\nonumber &\quad\quad\quad+\frac{1}{2}\kappa_n\left| \sum_{j=1}^{n}\frac{w_{j}^{(1)}w_{j}^{(2)}(p_j+q_j)}{\alpha_j \beta_j} -\sigma^2 \right|+\kappa_n\left|\mathbb{E}T_n\right|\bigg)\\
\nonumber &= \left(2+\frac{1}{3}|\mathbb{E}(T_n)|\right)\kappa_n \sum_{j=1}^{n}\frac{w_{j}^{(1)}w_{j}^{(2)} }{\alpha_j \beta_j}\\
\nonumber &\quad\quad+\left(2+\frac{1}{2}|\mathbb{E}(T_n)|\right)\kappa_n\bigg| \sum_{j=1}^{n}\left(\frac{w_{j}^{(1)} }{\alpha_j}-\frac{w_{j}^{(2)}}{\beta_j} \right)\bigg|\\
&\quad\quad\quad+\frac{1}{2}\kappa_n\left| \sum_{j=1}^{n}\frac{w_{j}^{(1)}w_{j}^{(2)}(p_j+q_j)}{\alpha_j \beta_j} -\sigma^2 \right|+\kappa_n\left|\mathbb{E}T_n\right|,
\end{align}
which gives the error in the closeness between the distributions of $T_n$ and $Z_\sigma$. 
\end{exmp}
\begin{rem}
(i) Note that if we set $w_j^{(1)}=w_j^{(2)}=w_j$, $p_j=q_j$, $\alpha_j=\beta_j$ such that $w_j/\alpha_j,w_j/\beta_j \to 0$ and $Var(T_n) \to \sigma
^2$, then by \eqref{bgclt}, $d_3(T_n, Z_\sigma) \to 0,$ as $n \to \infty$.

\item [(ii)]Note also that, if $\sigma=1, w_j^{(1)}=w_j^{(2)}=1/\sqrt{n}, \alpha_j=\beta_j=n\alpha , \text{ and } p_j=q_j=\frac{1}{2}n^3\alpha^2$ in \eqref{bgclt}, we get $d_3(T_n,Z_1) \to 0$ as $n\to \infty$. This result is, in a sense a generalization of the Proposition 3.1 of \cite{bilateral0}, since $T_1=w_1^{(1)}X_1-w_1^{(2)}Y_1\sim $ BG distribution.
\end{rem}

\section{Mixed Bilateral gamma process}\label{BGpro}
\noindent
In this section, we first introduce the mixed bilateral gamma process, which is a BG process with random parameters. We then discuss some of its important properties.

\noindent
Let $\{T_n(t)\}_{t\geq 0}$ be a L\'evy process with cf 
\begin{align}\label{cfBGRP}
	\mathbb{E}(e^{izT_n(t)})=e^{t \tau (z)},~z\in\mathbb{{R}},
\end{align}
\noindent
where $\tau(z)$ is the characteristic exponent, given by $\tau(z)=\int_{\mathbb{{R}}}(e^{izu}-1)\nu_{T_n}(du)$ and $\nu_{T_n}$ is the L\'evy measure given in \eqref{lcbg1}. Note the case $n=1$ was first considered by K$\ddot{\text{u}}$chler and Tappe \cite{bilateral0}. Recently, by incorporating a diffusion component into the BG process, Kirkby {\it et al.} \cite{bgm2024} introduced BG motion and applied it in stock models. From \eqref{lcbg1}, we see that $\nu_{T_n}(\mathbb{R})=\infty$ and $\int_{\mathbb{R}}|u| \nu_{T_n}(du)<\infty$. Since the Gaussian component is zero, $T_n(t)$ is classified as type B according to Definition 11.9 of Sato \cite{sato}. Moreover, $T_n(t)$ is a finite-variation process (see \cite[Theorem 21.9]{sato}) with countable jumping times, almost surely (see \cite[Theorem 21.3]{sato}). We note that all the increments of $T_n(t)$ have the BG distribution, but with random parameters. More precisely, 
\begin{align}\label{bgprorandom}
T_n(t) -T_n(s)\sim BG\left(\eta, (L_n+p)(t-s),\xi,(M_n+q)(t-s)\right),~0\leq s \leq t,	
\end{align}
\noindent
 where $\eta=\alpha/1-\alpha$, $\xi=\beta/1-\beta$, and $\alpha,\beta,L_n,M_n,p$, and $q$ are as defined in \eqref{ref1} and \eqref{ref2}, respectively. Also,
 \begin{align}\label{lcbgpro1}
T_n(t) \sim BG\left(\eta, (L_n+p)t,\xi,(M_n+q)t\right),~ t\geq 0.	
\end{align}
\noindent
Since $L_n$ and $M_n$ are random,  $T_n(t)$ is indeed a mixed BG process.

 \noindent
 Next, we present an application of the above introduced process to stock models.
 \vspace{-0.5cm}
 \subsection{Arbitrage-free Stock models}
 In the continuous time finance, one of the crucial task is to find a realistic and analytically tractable models for price evolutions of risky financial assets (see, \cite{bgm2024} and \cite{bilateral0}, \cite{bilateral2}). We consider the exponential L\'evy models (see, K$\ddot{\text{u}}$chler and Tappe \cite{bilateral2,tsd2013})
  \begin{align}\label{LCBGstock}
 	\begin{cases}
 	S_t&=S_0e^{T_n(t)}\\
 	B_t&=e^{rt},
 	\end{cases}
 \end{align}
\noindent
which consists two financial assets $(S,B)$. Here $S$ is the dividend paying stock with deterministic initial value $S_0$ with dividend rate $v\geq 0$. Also, $B$ is the bank account with fixed interest rate $r\geq 0$.  Note that the model \eqref{LCBGstock} is a generalization of BG stock models considered in \cite{bilateral2}. Recently, by incorporating a diffusion component into the BG process, Kirkby {\it et al.} \cite{bgm2024} developed a generalized exponential L\'evy model and derived an arbitrage-free option pricing formula. In practice, it is often necessary to address the appropriate pricing of European options $\Phi(S_T)$, where $T > 0$ represents the maturity time and $\Phi : \mathbb{R} \to \mathbb{R}$ denotes the payoff function. Also, the option price is given by $\pi:=e^{-rT}\mathbb{E}_{\mathbb{Q}}[\Phi(S_T)],$ where $\mathbb{Q} \sim \mathbb{P}$ is a local martingale measure. Here, $\mathbb{Q}$ is also a probability measure which is equivalent to the objective probability measure $\mathbb{P}$, such that the discounted stock price process
\begin{align}\label{dspp}
	\tilde{S}_t := e^{-(r-v)t}S_t = S_0 e^{T_n(t) - (r-v)t}, \quad t \ge 0
\end{align}
is a local $\mathbb{Q}$-martingale, see \cite{bilateral2}. 

\noindent
The following lemma follows from Lemma 3.1 of \cite{bilateral2}, which establishes necessary and sufficient conditions for the existence of a probability measure $\mathbb{Q} \sim \mathbb{P}$.
\vspace{-.5cm}
\begin{lem}
Let $r\geq v \geq 0$. Let $\{T_n(t)\}_{t\geq 0}$ be a L\'evy process with cf \eqref{cfBGRP}. Then, $\mathbb{P}$ is a martingale measure for $\tilde{S}_t$ if and only if 
\begin{align}\label{dspp1}
\mathbb{E}_{L_n} \left(\left(\frac{\eta}{\eta-1}\right)^{L_n}  \right) \mathbb{E}_{M_n} \left(\left(\frac{\xi}{\xi-1}\right)^{M_n}  \right)=(1-1/\eta)^p (1-1/\xi)^q e^{r-v},
\end{align}
where $\eta=\alpha/1-\alpha$, $\xi=\beta/1-\beta$, and $\alpha,\beta,L_n,M_n,p$, and $q$ are as defined in \eqref{ref1} and \eqref{ref2}, respectively.
\end{lem}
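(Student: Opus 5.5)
The plan is to reduce the martingale condition to a moment-generating-function identity at $z=1$ and then evaluate the latter with the mixture representation \eqref{ggd1}.

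\textbf{Step 1: reduction to an mgf condition.} By \eqref{cfBGRP}, $\{T_n(t)\}$ is a L\'evy process. Lemma 3.1 of \cite{bilateral2} (or the standard characterisation for exponential L\'evy models) says that $\tilde S_t=S_0e^{T_n(t)-(r-v)t}$ is a $\mathbb{P}$-martingale if and only if two conditions hold:
\begin{itemize}
\item[(a)] $\mathbb{E}(e^{T_n(1)})<\infty$;
\item[(b)] $\mathbb{E}(e^{T_n(1)})=e^{r-v}$.
\end{itemize}
Sufficiency is routine. Independent increments give
\[
\mathbb{E}\big(\tilde S_t\mid\mathcal F_s\big)=\tilde S_s\,\mathbb{E}\big(e^{T_n(t-s)}\big)e^{-(r-v)(t-s)},
\]
and stationarity gives $\mathbb{E}(e^{T_n(u)})=\big(\mathbb{E}e^{T_n(1)}\big)^u$, so (b) makes the last two factors cancel. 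Necessity follows by taking expectations at $t=1$.

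\textbf{Step 2: integrability.} Condition (a) requires finiteness of the exponential moment. From the L\'evy measure \eqref{lcbg1}, this holds if and only if $\int_1^\infty e^u\,\nu_{T_n}(du)<\infty$, that is, $\alpha_j/w_j^{(1)}>1$ for all $j$. In terms of the notation \eqref{ref1}, this should correspond to $\eta>1$. I will state this as an implicit standing assumption, since $(\eta/(\eta-1))^{L_n}$ is only meaningful when $\eta>1$; the $\xi$ side is automatically fine.

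\textbf{Step 3: evaluating the mgf.} Set $t=1$ and $z=1$ in \eqref{ggd1}, using Theorem \ref{THM1} for $T_n(1)=T_n$. This gives
\[
\mathbb{E}(e^{T_n})=\mathbb{E}_{L_n}\Big(\big(\tfrac{\eta}{\eta-1}\big)^{p+L_n}\Big)\,\mathbb{E}_{M_n}\Big(\big(\tfrac{\xi}{\xi+1}\big)^{q+M_n}\Big).
\]
Pulling out the deterministic factors $(\eta/(\eta-1))^p=(1-1/\eta)^{-p}$, and similarly for $\xi$, and then equating with $e^{r-v}$, rearranges into the form \eqref{dspp1}.

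\textbf{Main obstacle: matching the $\xi$ factor.} The mgf naturally produces $\xi/(\xi+1)$, which is the correct sign for the negative part $-\sum w_j^{(2)}Y_j$. The statement instead displays $\xi/(\xi-1)$ and $(1-1/\xi)^q$. I would first try to reconcile the two through the sign convention for $Y$ in \eqref{defTn}, i.e. whether the mixture representation of the negative part, via \eqref{mggd1}, is written with $\xi\mapsto-\xi$. If no such convention makes them agree, I would present the identity with $\xi/(\xi+1)$ and $(1+1/\xi)^q$, note that this is what the computation actually gives, and flag \eqref{dspp1} as requiring this correction.

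A secondary issue is justifying the interchange of the series over $L_n$ and $M_n$ with the expectation. Since all terms are nonnegative, this follows from Tonelli's theorem given the finiteness established in Step 2.
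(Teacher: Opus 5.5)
Your route is the same as the paper's: reduce, via K\"uchler--Tappe's characterisation for exponential L\'evy models, to $\mathbb{E}[e^{T_n(1)}]=e^{r-v}$, then evaluate the left side with the mixture mgf \eqref{mggd1}/\eqref{ggd1} at $z=1$.

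Your ``main obstacle'' is not something to reconcile. It is an error in the statement. The paper's proof writes the second factor as $\mathbb{E}_{M_n}\big((\xi/(\xi-1))^{q+M_n}\big)$. This contradicts its own \eqref{mggd1}: the negative part $-V_n$, with $V_n\sim Ga(\xi,M_n+q)$, contributes $(\xi/(\xi+1))^{q+M_n}$.

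No sign convention repairs this. Take $n=1$ and $w_1^{(1)}=w_1^{(2)}=1$. Then $\eta=\alpha_1$, $\xi=\beta_1$ and $c_1=d_1=1$, so $L_1=M_1=0$ a.s. Hence $\mathbb{E}e^{T_1}=(\alpha_1/(\alpha_1-1))^{p_1}(\beta_1/(\beta_1+1))^{q_1}$, which is exactly K\"uchler--Tappe's BG martingale condition. With $\alpha_1=\beta_1=2$ and $p_1=q_1=1$, the martingale property holds iff $e^{r-v}=4/3$, while the printed \eqref{dspp1} demands $e^{r-v}=4$. So state and prove directly
\[
\mathbb{E}_{L_n}\Big(\big(\tfrac{\eta}{\eta-1}\big)^{L_n}\Big)\,\mathbb{E}_{M_n}\Big(\big(\tfrac{\xi}{\xi+1}\big)^{M_n}\Big)=(1-1/\eta)^p(1+1/\xi)^q e^{r-v},
\]
which is what your computation gives.

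One smaller correction to your Step 2. By \eqref{ref1}, $\eta=\alpha/(1-\alpha)=\max_j\alpha_j/w_j^{(1)}$, since $x\mapsto x/(1+x)$ is increasing. So $\eta>1$ does not give $\mathbb{E}e^{T_n(1)}<\infty$. Finiteness needs $\alpha_j>w_j^{(1)}$ for every $j$, or equivalently $\mathbb{E}_{L_n}\big((\eta/(\eta-1))^{L_n}\big)<\infty$. This follows because the probability generating function of $L_n$ is $\prod_j\big(\tfrac{\alpha_j}{w_j^{(1)}\eta}\big)^{p_j}\big(1-(1-\tfrac{\alpha_j}{w_j^{(1)}\eta})u\big)^{-p_j}$.

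This does not damage your argument. Tonelli gives the product formula for $\mathbb{E}e^{T_n(1)}$ in $[0,\infty]$ with no finiteness hypothesis. Equality with the finite number $e^{r-v}$ then forces integrability, so condition (a) is automatic. Keep $\eta>1$ only as the condition that makes \eqref{dspp1} meaningful, and drop the phrase ``given the finiteness established in Step 2''.
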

\vspace{-.8cm}
\begin{proof}
	From \eqref{mggd1}, we have $\mathbb{E}(e^{T_n(1)})=\mathbb{E}_{L_n} \left(\left(\frac{\eta}{\eta-1}\right)^{p+L_n}  \right) \mathbb{E}_{M_n} \left(\left(\frac{\xi}{\xi-1}\right)^{q+M_n}  \right)<\infty$. By Lemma 2.6 of \cite{bilateral2}, the discounted process $\tilde{S_t}$ in \eqref{dspp} is a local martingale if and only if $\mathbb{E}[e^{T_n(1)-(r-v)}]=1,$ which is only the case if and only if \eqref{dspp1} holds.
\end{proof}
\vspace{-.46cm}
\subsubsection{Pricing formula}
The existence of a martingale measure $\mathbb{Q} \sim \mathbb{P}$ ensures that the stock market is free of arbitrage, and the price of an European option $\Phi(S_T )$, where $T( > 0)$ is the time of maturity and $\Phi : \mathbb{R}\to \mathbb{R}$ is the pay-off function, is given by
\begin{align}
\pi=e^{-rT}\mathbb{E}_{\mathbb{Q}}[\Phi(S_T)].
\end{align}
\noindent
Also, an arbitrage-free pricing formula for a European Call Option at time $t (\ge 0)$ is, provided that $S_t = s$, given by (see \cite{bilateral0})
\begin{equation}\label{oppf1}
\pi = e^{-rT}\mathbb{E}_{\mathbb{Q}}\left[(S_T - K)^+ \mid S_t = s\right], ~T>t,
\end{equation}
where $K$ denotes the strike price. Setting $t^\prime=(T-t)$, we can express the expectation in \eqref{oppf1} as
\begin{align}\label{pf1}
\nonumber\pi&=e^{-rT}\mathbb{E}_{\mathbb{Q}}\left[(S_t e^{T_n(T)-T_n(t)} - K)^+ \mid S_t = s\right]\\
&= e^{-rT} \int_{\ln\left(\frac{K}{s}\right)}^{\infty} 
(se^x - K) h_{T_n}(x,t^\prime) dx,
\end{align}  
\noindent
where $h_{T_n}(x,t^\prime)$ denotes the pdf of $T_n(t^\prime) \sim BG \left(\eta, (L_n+p)t^\prime,\xi,(M_n+q)t^\prime \right)$ (see, Section \ref{pdfTn}). 

\noindent
Note that \eqref{pf1} provides a pricing formula driven by a mixed BG process, which extends the pricing formula, for the case $n\geq 2$, for BG stock models considered in Proposition 8.3 of \cite{bilateral0}.

\noindent
 As a special case, we get an arbitrage-free pricing formula driven by the L\'evy process associated with the linear combination of gamma r.v.s as:
\begin{align}\label{lcgop}
\pi
&=c_ne^{-rT}\sum_{j=0}^{\infty}\gamma_j \frac{\eta^{(p+j)t^\prime }  }{\Gamma t^\prime (p+j)}\int_{\ln\left(\frac{K}{s}\right)}^{\infty} (se^x - K)x^{(p+j)t^\prime-1}e^{-\eta x}~ dx,
\end{align}
\noindent
where $\eta=\alpha/1-\alpha$, and $c_n,\gamma_j,p, \alpha$ are defined in \eqref{ref1} and \eqref{ref2}. Let $\Gamma(w,z):=\int_{z}^{\infty}x^{w-1}e^{-x}dx,$ $~z\in \mathbb{R}$ be the incomplete gamma function. Then, \eqref{lcgop} can be written as
\begin{align*}
\pi
&=c_ne^{-rT}\sum_{j=0}^{\infty} \frac{\gamma_j   }{\Gamma  (p+j)t^\prime}\bigg( s \left(\frac{\eta}{\eta -1}\right)^{(p+j)t^\prime}  \Gamma \left((p+j)t^\prime, (\eta-1)ln\left(\frac{K}{s}\right)\right) \\
&\quad\quad-  K\Gamma \left((p+j)t^\prime, \eta ln\left(\frac{K}{s}\right)\right) \bigg).
\end{align*}
\noindent
 When $s=K$, we get an exact pricing formula as
\begin{align*}
\pi&=c_nKe^{-rT}\sum_{j=0}^{\infty}\gamma_j \frac{\eta^{(p+j)t^\prime }  }{\Gamma t^\prime (p+j)}\int_{0}^{\infty}e^xx^{(p+j)t^\prime-1}e^{-\eta x}~ dx -Ke^{-rT}\\
&=Ke^{-rT}\left(c_n\sum_{j=0}^{\infty}\gamma_j\left( \frac{\eta}{\eta -1} \right)^{(p+j)t^\prime }-1\right)\\
&=Ke^{-rT}\left( \mathbb{E}_{L_n} \left( \frac{\eta}{\eta -1} \right)^{(L_n+p)t^\prime }-1\right),
\end{align*}
\noindent
where $\mathbb{E}_{L_n}$ is the expectation with respect to the rv $L_n$.



\vskip 1ex
\noindent
\textbf{Acknowledgement:} The first author acknowledges financial support from the Research Seed Grant of NIT Warangal.

\setstretch{1}

\end{document}